 \title{On second case of Strong Fermat's Last Theorem  conjecture  } 
\author{Roland Qu\^eme}
\newtheorem{thm}{Theorem}[section]
\newtheorem{cor}[thm]{Corollary}
\newtheorem{lem}[thm]{Lemma}
\newtheorem{conj}{Conjecture}
\newtheorem{defi}{Definition}
\newtheorem{notas}{Notations}
\newtheorem{rema}{Remark}
\font\mathbb=msbm10
\newcommand{\F}{\mbox{\mathbb F}}
\newcommand{\Q}{\mbox{\mathbb Q}}
\newcommand{\Z}{\mbox{\mathbb Z}}
\newcommand{\be}{\begin{equation}}
\newcommand{\ee}{\end{equation}}
\newcommand{\bd}{\begin{displaymath}}
\newcommand{\ed}{\end{displaymath}}
\newcommand{\bn}{\begin{enumerate}}
\newcommand{\en}{\end{enumerate}}
\newcommand{\ri}{\rightarrow}
\newcommand{\mk}{\mathfrak}
\newcommand{\ml}{\mathcal}
\newcommand{\mf}{\mathbf}
\newcommand{\ov}{\bar}
\date{2013 Mai 28}
\begin{document}
\maketitle
%
%
%

\tableofcontents
\maketitle

\begin{abstract}
This article  deals with  a  conjecture, introduced in [GQ] (hereinafter $SFLT2$),  which  generalizes  the second case of 
Fermat's Last Theorem:
{\it Let $p>3$ be a prime.
The diophantine  equation 
$\frac{u^p+v^p}{u+v}=w_1^p$
with  $u,v,u+v, w_1\in\Z\backslash\{0\}$, $u,v$  coprime and   $v\equiv 0 \mod p$ 
has no solution.}
Let $\zeta$ be a $p$th primitive root of unity and $K:=\Q(\zeta)$.
A prime $q$ is said {\it $p$-principal} if the class   of any prime ideal $\mk q_K$ of $K$ over $q$ is a 
$p$-power of a class.

Assume  that $SFLT2$ fails for $(p,u,v)$.
Let  $q$ be any  odd prime coprime with $puv$, $f$ the order of $q\mod p$,   $n$ the order of 
$\frac{v}{u}\mod q$, $\xi$ a primitive
$n$th
root of unity, $\mk q$ the prime ideal  $(q,u\xi-v)$ of $\Q(\xi)$.
In this   complement  of the article [GQ]  
revisiting some works of Vandiver, we prove  that,
if  $q$ is {\it $p$-principal} and $n\not=2p$  then   
$$\Big(\frac{1+\xi\zeta^k}{1+\xi\zeta}\Big)^{(q^f-1)/p}\equiv 1\mod \mk q\mbox{\ for\ }k=1,\dots,p-1.$$
We shall  derive ,  by example,  of this congruence that,  for $p$ sufficiently large,   a very large number of primes   should divide $v$. In an other hand we shall show   
that if $q$  is any  prime of order $f\mod p$  dividing $(u^p+v^p)$ then $$(1-\zeta)^{(q^f-1)/p}\equiv p^{-(q^f-1)/p}\mod q, $$
and a result of same nature if $q$ divides $u^p-v^p$,
which  reinforces strongly the  first and second theorem of Furtw\"angler.
The principle of proof relies on the $p$-Hilbert class field theory.
\end{abstract}

\paragraph{Keywords:}  Fermat's Last Theorem; cyclotomic fields; cyclotomic units; class field theory; Vandiver's and Furtw\"angler's theorems

 \paragraph{ MSC classification codes:}  11D41; 11R18; 11R37

\maketitle
\section{  Introduction } \label{s1012111}
Let  $p>3$ be a prime, $\zeta:=e^{\frac{2\pi i}{p}}$, $K:=\Q(\zeta)$
the  $p$th cyclotomic number field, $\Z_K$ the ring of integers  of $K$, 
 and  $\mk p=(1-\zeta)\Z_K$  the prime ideal of $\Z_K$ over $p$.
In [Gr2, Conj.\,1.5],  G. Gras  has given  a conjecture  which
implies Fermat's Last Theorem (FLT): we recall here this conjecture which
will be called {\it Strong Fermat's Last Theorem conjecture }, denoted  briefly $SFLT$.

\begin{conj}\label{cj1}  Let $p$ be an odd prime,  set $K = \Q(\zeta)$ 
and ${\mathfrak p} = (\zeta-1)\,\Z[\zeta]$. Then the equation 
    $$(u+v \,\zeta)\,\Z[\zeta] = {\mathfrak p}^\delta\, {\mathfrak w}_1^p$$
    in coprime integers $u,\,v$, where  $\delta$ is any integer $\geq 0$ 
    and  ${\mathfrak w}_1$ is any integral ideal of $K$, 
    has no solution for $p>3$ except the trivial  ones for which
   $u+v \,\zeta = \pm 1$, $\pm \zeta$, $\pm (1+\zeta)$, 
   or  $\pm (1-\zeta)$.
   \end{conj}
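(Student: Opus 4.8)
The plan is to assume, toward a contradiction, a nontrivial solution $(u+v\zeta)\Z[\zeta]=\mathfrak p^{\delta}\mathfrak w_1^{p}$ with $u,v$ coprime and $u+v\zeta$ not among the listed exceptional values, and to feed this into the congruence of the main theorem through $p$-Hilbert class field theory. First I would record the basic invariants. Taking the norm gives $N_{K/\Q}(u+v\zeta)=\frac{u^{p}+v^{p}}{u+v}$, so that $\big|\frac{u^{p}+v^{p}}{u+v}\big|=p^{\delta}N(\mathfrak w_1)^{p}$; this links the ideal equation to the equation $SFLT2$ and separates the first case ($\delta=0$, with $\gcd(p,uv(u+v))=1$) from the second case ($\delta\ge 1$, $p\mid u+v$). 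Since $\mathfrak p=(1-\zeta)$ is principal and $u+v\zeta$ generates a principal ideal, the relation $[\mathfrak w_1]^{p}=[\mathfrak p]^{-\delta}=1$ places $[\mathfrak w_1]$ in $\mathrm{Cl}_K[p]$. Applying $\sigma_k:\zeta\mapsto\zeta^{k}$ and dividing, the fractional ideal $\mathfrak c_k:=\sigma_k(\mathfrak w_1)\,\mathfrak w_1^{-1}$ satisfies $(\beta_k)=\mathfrak c_k^{\,p}$, where $\beta_k:=\frac{u+v\zeta^{k}}{u+v\zeta}$, because $\sigma_k$ fixes $\mathfrak p$.

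Next I would reinterpret the congruence. Since $f$ is the order of $q\bmod p$, a prime $\mathfrak Q$ of $K$ above $q$ has residue field $\F_{q^{f}}$ with $p\mid q^{f}-1$, so raising to $(q^{f}-1)/p$ computes the $p$-th power residue symbol $\big(\tfrac{\cdot}{\mathfrak Q}\big)_p$. Using $u\xi\equiv v$ modulo $\mathfrak q$, reduction modulo $\mathfrak Q$ identifies $\frac{1+\xi\zeta^{k}}{1+\xi\zeta}$ with $\beta_k$; hence the main theorem asserts exactly that $\big(\tfrac{\beta_k}{\mathfrak Q}\big)_p=1$ for every $p$-principal prime $\mathfrak Q$ with $n\ne 2p$ and every $k$. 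By Artin reciprocity this is equivalent to $\mathrm{Frob}_{\mathfrak Q}$ being trivial in $\mathrm{Gal}\big(K(\sqrt[p]{\beta_k})/K\big)$, i.e. to $\mathfrak Q$ splitting completely in the Galois extension $M:=K(\sqrt[p]{\beta_1},\dots,\sqrt[p]{\beta_{p-1}})$.

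The decisive step is a Chebotarev rigidity argument. Let $H_1/K$ be the maximal unramified abelian extension of exponent $p$, so that the Artin map gives $\mathrm{Gal}(H_1/K)\cong\mathrm{Cl}_K/\mathrm{Cl}_K^{p}$ and a prime is $p$-principal exactly when it splits completely in $H_1$. The previous paragraph therefore says that every prime splitting completely in $H_1$ (and avoiding the thin condition $n=2p$) splits completely in $M$; by the splitting criterion for Galois extensions (note $M/K$ is Galois, as $\zeta\in K$) this forces $M\subseteq H_1$, so each $K(\sqrt[p]{\beta_k})/K$ is unramified. Since $(\beta_k)=\mathfrak c_k^{\,p}$ already makes these extensions unramified away from $p$, the content is unramifiedness at $\mathfrak p$, which imposes an explicit local $p$-th-power (principal-unit) congruence on $\beta_k$ modulo a power of $1-\zeta$ — precisely a Furtw\"angler-type condition, of the same nature as the congruences announced in the abstract. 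For a typical nontrivial solution some $\beta_k$ is not such a local $p$-th power, so $K(\sqrt[p]{\beta_k})$ is ramified at $\mathfrak p$, contradicting $M\subseteq H_1$ and excluding that solution outright.

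It remains to treat the solutions surviving this local screening, namely those for which every $\beta_k$ satisfies the Furtw\"angler congruence at $\mathfrak p$. Here I would exploit the global constraint $M\subseteq H_1$ itself: under the Kummer duality between $\mathrm{Cl}_K[p]$ and unramified exponent-$p$ extensions, the relation $[\mathfrak c_k]=(\sigma_k-1)[\mathfrak w_1]$ ties the admissible classes $[\mathfrak w_1]$ to the reflection pairing on $\mathrm{Cl}_K[p]$, and the counting mechanism of the abstract — each admissible $q$ whose congruence would fail must divide $uv$, so that satisfying all of them would force an impossibly large number of prime divisors of $v$ once $p$ is large — should eliminate every nontrivial $\mathfrak w_1$, leaving only $u+v\zeta=\pm1,\pm\zeta,\pm(1+\zeta),\pm(1-\zeta)$. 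The main obstacle is exactly this final exclusion: ruling out a nontrivial $p$-torsion class $[\mathfrak w_1]$ that meets all the local-at-$\mathfrak p$ and reflection constraints simultaneously requires controlling $\mathrm{Cl}_K[p]$ as a Galois module in full generality, which is where the argument meets the known depth of the conjecture and where Vandiver-type information about the $p$-rank of the class group becomes indispensable.
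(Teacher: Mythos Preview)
The statement you are attempting to prove is Conjecture~\ref{cj1}, the \emph{Strong Fermat's Last Theorem conjecture} (SFLT). The paper does not prove it; it is stated as an open conjecture attributed to Gras, and the rest of the article derives conditional consequences of a hypothetical counterexample to its second case. There is therefore no ``paper's own proof'' to compare your argument with, and your closing paragraph correctly senses this: you end by conceding that eliminating a nontrivial $p$-torsion class for $[\mathfrak w_1]$ ``meets the known depth of the conjecture''. That is the honest situation---no argument in the paper, and no known argument elsewhere, completes this step.

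Beyond that structural point, the partial argument you sketch has two genuine gaps. First, the main theorem and its corollaries are proved only under the SFLT2 hypothesis $p\mid v$; Conjecture~\ref{cj1} covers all three cases (first, second, special), so even a successful version of your strategy would establish only one case. Second, and more seriously, your Chebotarev step is circular in the second case. In SFLT2 one has $u+v\zeta^k\equiv u\pmod{p}$ for every $k$, so each $s_k(\gamma)$---and hence each $\beta_k$---is already a $p$-primary pseudo-unit; this is exactly the input used to prove Lemma~\ref{l2} (via $\big(\frac{s_k(\gamma)}{\mathfrak q_K}\big)_{\!K}=1$ for $p$-principal $\mathfrak q_K$). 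Thus the inclusion $M\subseteq H_1$ that you extract from the main theorem by Chebotarev is not a new constraint but a restatement of the hypothesis that fed into the theorem; in particular your sentence ``for a typical nontrivial solution some $\beta_k$ is not such a local $p$-th power'' is false under $p\mid v$, and no contradiction is produced. The heuristic divisor-counting you invoke at the end is indeed in the paper, but it is presented there as a probabilistic remark (Remark~\ref{r1305241}), not as a proof.
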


The cases $uv(u+v)\not\equiv 0\mod p, \ uv\equiv 0\mod p$, and $u+v\equiv
0\mod p$
are called respectively the {\it  first, second, and special case of $SFLT$}.

\smallskip From some works of Furtw\"angler and Vandiver,  
Gras and  I [GQ] have put
the basis for  a new cyclotomic approach to  Fermat's Last Theorem  by 
introducing some auxiliary  fields of the form $\Q(\mu_{q-1})$ 
with prime  $q\not= p$   in   study  of $SFLT$ equation. 

\smallskip 

In this   article,  we  examine    some particularities of   the  second
case of $SFLT$
(hereinafter $SFLT2$). 
Without loss of generality,  we   choose the
following  formulation of   $SFLT2$ in  all  the sequel:

{\it Let $p>3$ be a prime. The diophantine  equation 
$\frac{u^p+v^p}{u+v}=w_1^p$
with  $u,v,u+v, w_1\in\Z\backslash\{0\}$, $u,v$  coprime and   $v\equiv 0 \mod p$ 
has no solution.}

Observe that we assume $p>3$ because, for $p=3$, the diophantine equation $\frac{u^3+v^3}{u+v} =w_1^3$
has infinitely many solutions $(u,v)$  where 
$$(u,v)=(s^3+t^3-3st^2, 3s^2t-3st^2)= (s+tj)^3,$$
where $s, t$ spans all $s,t\in\Z,\  s+t\equiv 0\mod 3, \ \gcd(s,t)=1$ (see [GQ] remark 2.6).

Thus it is always assumed in the sequel, without further mention, that $p>3$ and $p|v$.
Observe that $SFLT2$ implies the second  case  $FLT2$ of $FLT$.

In the   first subsection, we will fix some general notations, conventions,   definitions, and  in the  second subsection  p. \pageref{s1111232}, we will state  the  main  results of the article.
\subsection{General  notations and definitions } \label{definitions}

\begin{notas}\label{n1}
$ $
{\rm

- Let $g:={\rm Gal}(K/\Q)$, for $k \not\equiv 0 \mod p$ and  $s_k :
\zeta \rightarrow\zeta^k$   
 the $p-1$ distinct elements of $g$. 

- Let $C\ell_K$, $C\ell$  and $C\ell_{[p]}$ be respectively the class group of $K$, 
the $p$-class group of $K$  and  the $p$-elementary
class group of $K$.
For any ideal  $\mk a$  of $K$, 
let us note $c\ell_K(\mk a)$ and $c\ell(\mk a)$ the class of $\mk a$ in $C\ell_K$ and  $C\ell$.

- For any integer  $m>0$,  let $\Phi_m(X)$ be 
the $m$th cyclotomic polynomial and   $\phi(m)$ the Euler indicator. 
For   $a,b\in\Z\backslash \{0\}$, let us define  
$\Phi_{m}(a,b) := b^{\phi(m)}\Phi_{m}\big(\frac{a}{b}\big).$
Clearly $\Phi_m(a,b)\in\Z[a,b]$.
}
\end{notas}

\begin{defi}$ $
A number $\alpha\in K^\times$ prime to $p$, such that $\alpha\Z_K$ is
the $p$th power of an ideal,  is called a pseudo-unit.
The pseudo-unit $\alpha$ is   $p$-primary  (i.e. 
the extension $K(\sqrt[p]{\alpha})/K$ is unramified at $\mk p$) 
if and only if $\alpha$ is congruent 
to a $p$-power $\mod \mk p^p$, see [Gr2] lem 2.1.
\end{defi}

\begin{defi}
A prime $q$ is said {\it  $p$-principal}
if the class $c\ell_K (\mk q_K)\in C\ell_K$  of any prime ideal 
$\mk q_K$ of $\Z_K$ above  $q$ is the $p$th power of a class, which is 
equivalent to $\mk q_K = {\mk a}^p (\alpha)$, for an ideal ${\mk a}$
of $K$ and an $\alpha \in K^\times$.
This contains the case where  the class $c\ell_K (\mk q_K)$ is of order
coprime with  $p$.
\footnote{  Usually $q$ is said $p$-principal if the class $c\ell_K (\mk q_K)$ is of order
coprime with  $p$. We have adopted this generalization of the definition because  we are  interested in this article 
to some  $p$-power  residue symbols $\big(\frac{\eta}{\mk q_K}\big)_{\!\! K}$ for $\eta$ a $p$-primary unit of $K$.
Class field theory implies that $\big(\frac{\eta}{\mk q_K}\big)_{\!\! K}=1$ even with this generalization.}
\end{defi}

We assume  that  $SFLT2$ fails for  $(p,u,v)$;  then $\gamma := u+ v\zeta\in\Z_K$
is a $p$-primary pseudo-unit of $\Z_K$ since $\gamma \equiv u \mod p$
(a generalization of a result of Kummer given  again in [Gr2], Theo.\,2.2).

\medskip

\begin{notas}

Let  $q$ be a prime number dividing $\Phi_n(u,v)$ with $q \not|\ n$   and $n=dp^r$  
where $d$ is prime to $p$ and $r\geq 0$, which implies that 
 $q\not|\  uv$ and $\frac{v}{u}$ is of order $n\mod q$ (see [GQ], Lem 2.11). We have
$$\Phi_{d\,p^r}(u,v) := \prod\  (u\,\psi^i\,\zeta_r^j - v)
\mbox{\ \ for all\ \ } i\ \in (\Z/d\Z)^\times \mbox{\ \ and\ \ } j\ \in
(\Z/p^r\Z)^\times,$$
where $\psi:=e^{\frac{2\pi i}{d}}$   and $\zeta_r:=e^{\frac{2\pi
i}{p^r}}$\  
(observe that the two previous definitions    
$\zeta:=e^{\frac{2\pi i}{p}}$ and $\zeta_r:=e^{\frac{2\pi i}{p^r}}$
imply that $\zeta_1=\zeta$).

\smallskip
 Let us  fix the  root of unity   $\xi :=\psi \,\zeta_r$.  
 \smallskip
Let  $L:= \Q(\xi)$ and $M=LK = \Q(\xi,\zeta)$.
Put ${\mathfrak q}  = (q, u\,\xi - v)$ where $\mk q$ is a prime ideal of
$L$ over $q$  because we have assumed $q\not|\  n$.
\footnote{Observe that the  prime ideal $\mk q$   
is fixed unambiguously by this  choice of $\xi$. }
We denote by ${\mathfrak Q} $ any prime ideal of $M$ over 
 ${\mathfrak q}$ and by $\mk q_K$ the prime ideal of $K$
 under  $\mk Q$. 

\smallskip
(i)  If $r=0$ then $L= \Q(\psi)$ and $M$ is of degree $p-1$ over
 $L$. 

\smallskip
(ii)  If $r \geq 1$ then $M=L$ and 
${\mathfrak Q} ={\mathfrak q}$.
\end{notas}

\smallskip
  Let us recall the definition of the $p$th power 
residue symbols in $K$ and $M$  with values in  $\mu_p$ (see [GQ]
definition 2.13).

\begin{defi}\label{d1103061}
If $\alpha \in M$ is prime to
${\mathfrak Q}\,\vert\, {\mathfrak q}$ in $M$, then let $\ov\alpha$ 
be the image of $\alpha$ in
the residue field $\Z_M/{\mathfrak Q} \simeq \mf F_{q^f}$; 
since
$\zeta \in \Z_M$, the image $\ov \zeta$ of $\zeta$ is of order $p$
(since $\zeta \not\equiv 1 \mod {\mathfrak Q}$) and we can put
$\ov\alpha_{}^{\,\kappa} = \ov \zeta^{\,\mu}$, $\kappa=\frac{q^f-1}{p}$,
$\mu \in \Z/p\Z$,
which defines the $p$th power residue symbol
$\big(\frac{\alpha}{{\mathfrak Q}}\big)_{\!\!M} :=
\zeta^\mu$; this symbol is equal to 1 if and only if $\alpha$ is a
local $p$th power at ${\mathfrak Q}$ (see
[Gr1,\,I.3.2.1, Ex.\,1]).

\smallskip\noindent
With this definition, for any automorphism $\tau\in {\rm Gal}(M/\Q)$ one
obtains, from $\alpha_{}^{\,\kappa} \equiv \zeta^{\,\mu}
\mod {\mathfrak Q}$,
$\tau \alpha_{}^{\,\kappa}  \equiv \tau \zeta^\mu
\mod \tau {\mathfrak Q}$, thus
$\tau \big(\frac{\alpha}{{\mathfrak Q}}\big)_{\!\!M} =
\big(\frac{\tau \alpha}{\tau {\mathfrak Q}}\big)_{\!\!M} =
\tau \zeta^\mu$.

\smallskip\noindent
If $\alpha \in K$, since ${\mathfrak q}_K\,\vert\,q$ in $K$ splits
totally in $M/K$, we have $\Z_K/{\mathfrak q}_K \simeq \Z_M/{\mathfrak Q}$
and
$\big(\frac{\alpha}{{\mathfrak q}_K}\big)_{\!\!K} =
\big(\frac{\alpha}{{\mathfrak Q}}\big)_{\!\!M}$
for any ${\mathfrak Q}\,\vert\,{\mathfrak q}_K$.
In particular this implies $\big(\frac{\zeta}{{\mathfrak 
q}_K}\big)_{\!\!K} = \zeta^{\kappa}$
(the symbol of $\zeta$ does not depend on the choice of ${\mathfrak 
q}_K \,\vert\, q$).
\end{defi}

\subsection {The main results}\label{s1111232} $ $

In the classical approach, the most part of the  results on FLT for the exponent $p$ 
are  obtained  by  localization at $p$ (Kummer, Mirimanoff, Wieferich, Vandiver and others)  
or  by some properties  of the $p$-class group of $K$ (Eichler).
There are less investigations with localizations at primes $q\not=p$
(Sophie Germain, Vandiver, Wendt,  Furtw\"angler, Krasner, D\'enes and others).

Revisiting  some  ideas of Vandiver for $FLT$ in [Va1, Va2]  involving 
a systematic use of   the $p$th  power residue symbols 
$\big(\frac{a}{\mk Q}\big)_{\!\! M}$ for  $a\in M$ 
coprime with $\mk Q$ (see definition \ref{d1103061}), this article is a  
 complement to the article [GQ] for localizations at primes $q\not=p$.

The main results of this article are :
Assume  that $SFLT2$ fails for $(p,u,v)$.
Let  $q$ be any  odd prime coprime with $puv$, $f$ the order of $q\mod p$,   $n$ the order of 
$\frac{v}{u}\mod q$, $\xi$ a primitive
$n$th
root of unity, $\mk q$ the prime ideal  $(q,u\xi-v)$ of $\Q(\xi)$.
\bn
\item
if  $q$ is {\it $p$-principal} and $n\not=2p$  we prove  (theorem \ref{t1304041}) that   
$$\Big(\frac{1+\xi\zeta^k}{1+\xi\zeta}\Big)^{(q^f-1)/p}\equiv 1\mod \mk q\mbox{\ for\ }k=1,\dots,p-1.$$
We shall  derive,  by example,  of this congruence that:
  
- With a probabilistic estimate, more  than half  the primes $r<p^{p/5}$ of even degree $\mod p$ should  divide $v$ 
(remark \ref{r1305241}).  

- If $q$ of order $f\mod p$  divides $(u^p+v^p)$ then $(1-\zeta)^{(q^f-1)/p}\equiv p^{-(q^f-1)/p}\mod q, $
(corollary \ref{c2} and \ref{c4}).

- If $q$ of order $f\mod p$  divides $(u^p-v^p)$ then $(1-\zeta)^{(q^f-1)/p}\equiv 1 \mod q, $
(corollary \ref{c2} and \ref{c3}).
These two last results  reinforce strongly the  first and second theorem of Furtw\"angler.

\item
If  $u+\zeta v\not\in K^{\times p}$, then $p$ is irregular and there exists an effective constant $C(p)$,
depending only on $p$ and  smaller than Minkowski Bound,    such that
there exists at least one  prime  $q<C(p)$  satisfying $q$ not dividing $uv$, $q$ non $p$-principal and 
$$\Big (\frac{\zeta^{-k^m}(1+\xi \zeta^k)}{\zeta^{-1}(1+\xi\zeta)}\Big)^{\frac{q^f-1}{p}}\equiv 1\mod \mk q\mbox{\ for\ }k=1,\dots,p-2,$$
for a certain integer $m\not\equiv 0\mod p$ and depending on $q$ (theorem \ref{t1012211}). 
\en 
The principle of proofs of the article  relies mainly  on the $p$-Hilbert class field theory.
Let us mention  that, in this complement, we limited ourselves  to the   second case of $SFLT$ and  principally to $p$-principal prime $q$.  
By opposite, we  took also in account the case where $p$ divides the order $n$ of $\frac{v}{u}\mod q,$  not examined in [GQ]. 
See also [Qu2] for some   improvements of these results in the second case $FLT2$ of Fermat's Last Theorem.

\section{The main theorem}
At first, we give  a definition and  an elementary lemma independent of $SFLT$.

\begin{defi}\label{d11O9221}
Let $n=dp^r$, with $d,p$ coprime and $r\geq 0$.
Let $\xi$ be a fixed primitive $n$th  root of unity
$\xi=\psi\zeta_r$ where  $\psi:=e^{\frac{2\pi i}{d}}$ and
$\zeta_r:=e^{\frac{2\pi i}{p^r}}$.

For all $\ 0\leq k <p-1$, let us define\,\footnote{The reason why  
$k = p-1$ is   discarded 
will be explained  in remark \ref{r1p1} after the lemma \ref{l2}.}
$$\varepsilon_k:= 1+\xi\zeta^k. \footnote{$\varepsilon_k$ is used with
this meaning
in the sequel of the article.}$$
\end{defi}

\begin{lem}\label{l1}$ $

a) If $k=0$, $\varepsilon_0 = 1 + \xi$  is a cyclotomic unit of $L$ 
except if $d=1$ ($\varepsilon_0 = 2$) or $d=2$  ($\varepsilon_0 = 0$).

\smallskip    
b) Suppose that $0<k<p-1$.

\smallskip

(i) If $d>2$ then  $\varepsilon_k=1+\xi\zeta^k$ is a cyclotomic unit.

\smallskip

(ii) If $d=2$ then $\varepsilon_k$ is not a cyclotomic unit and 

\smallskip
\hspace{0.6cm}--  If $r \geq 1$ then
$\varepsilon_k=1-\zeta_r^{1+kp^{r-1}}\in\Z[\zeta_r]$
with $\varepsilon_k\Z[\zeta_r]=\mk p_r$ 
where $\mk p_r$ is the prime ideal of $\Z[\zeta_r]$ above $p$.

\smallskip
\hspace{0.6cm}--  If $r=0$ then $\varepsilon_k= 1-\zeta^{k}$ with
$\varepsilon_k\Z_K=\mk p$.

\smallskip

(iii) If $d=1$ then $\varepsilon_k$ is a cyclotomic unit and 

\smallskip
\hspace{0.6cm}-- If $r \geq 1$ then $\varepsilon_k=
1+\zeta_r^{1+kp^{r-1}}$.

\smallskip
\hspace{0.6cm}--  If $r=0$ then $\varepsilon_k= 1+\zeta^{k}$.
\begin{proof}
Left to the reader.
\end{proof}
\end{lem}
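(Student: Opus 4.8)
The plan is to reduce every assertion to computing the multiplicative order of a single root of unity and then quoting two elementary facts about cyclotomic numbers. First I would put $\omega:=\xi\zeta^k$, so that $\varepsilon_k=1+\omega$, and express $\omega$ as a power of one primitive root of unity: if $r\geq 1$ then $\zeta=\zeta_r^{\,p^{r-1}}$, whence $\omega=\psi\,\zeta_r^{\,1+kp^{r-1}}$, and one checks that the exponent $1+kp^{r-1}$ is prime to $p$ for every $k$ with $0\leq k<p-1$ (it is $\equiv 1\bmod p$ when $r\geq 2$, and equals $1+k$ with $0<1+k<p$ when $r=1$; for $r=1$, $k=p-1$ it would vanish mod $p$, which is exactly why that value is discarded); if $r=0$ then $\zeta_0=1$ and $\omega=\psi\zeta^k$. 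Since $\gcd(d,p)=1$, the order $m$ of $\omega$ is then $m=dp^r=n$ when $r\geq1$, while for $r=0$ it is $m=d$ if $k=0$ and $m=dp$ if $0<k<p-1$. The two facts to quote are: (1)~for a primitive $m$-th root of unity $\rho$, the number $1-\rho$ is a unit of $\Z[\rho]$ if $m$ has at least two distinct prime divisors, generates the unique prime $\mk l$ of $\Z[\rho]$ over $\ell$ if $m=\ell^a$, and equals $0$ if $m=1$; (2)~$1+\rho=(1-\rho^2)(1-\rho)^{-1}$, so $1+\rho$ is a unit once the ideals $(1-\rho)$ and $(1-\rho^2)$ coincide, in which case, being a ratio of cyclotomic numbers, it is a cyclotomic unit of $\Q(\rho)$; in general $N_{\Q(\rho)/\Q}(1+\rho)=\pm\Phi_m(-1)$.

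For the case $d\geq 3$ — which covers part (b)(i), and also $k=0$ when $r\geq1$ — the orders of $\omega$ and of $\omega^2$ (the latter being $m$ or $m/2$) are each divisible by $p$ and by a prime dividing $d$; since $\gcd(d,p)=1$, neither order is a prime power, so both $1-\omega$ and $1-\omega^2$ are units and therefore $\varepsilon_k=1+\omega=(1-\omega^2)(1-\omega)^{-1}$ is a cyclotomic unit of $\Q(\zeta_m)\subseteq M$.

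For the remaining cases I would simply substitute the value of $\psi$. When $d=2$ we have $\psi=-1$, so $\xi=-\zeta_r$ and $\varepsilon_k=1-\zeta_r\zeta^k$; for $r\geq1$ this equals $1-\zeta_r^{\,1+kp^{r-1}}$, a generator of the prime $\mk p_r$ of $\Z[\zeta_r]$ over $p$, hence not a unit (and plainly in $\Z[\zeta_r]$), while for $r=0$ it is $1-\zeta^k$, generating $\mk p$ when $0<k<p-1$ and equal to $0$ when $k=0$. When $d=1$ we have $\psi=1$, so $\xi=\zeta_r$ and $\varepsilon_k=1+\zeta_r\zeta^k$; for $r\geq1$ this equals $1+\zeta_r^{\,1+kp^{r-1}}$, and since $p$ is odd $2(1+kp^{r-1})$ is again prime to $p$, so $1-\zeta_r^{\,1+kp^{r-1}}$ and $1-\zeta_r^{\,2(1+kp^{r-1})}$ both generate $\mk p_r$ and their quotient $\varepsilon_k$ is a cyclotomic unit; for $r=0$, $\varepsilon_k=1+\zeta^k$ is a cyclotomic unit when $0<k<p-1$ by the same computation with $\zeta$ in place of $\zeta_r$. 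Finally part (a) is the case $k=0$: $\varepsilon_0=1+\xi$ with $\xi$ of order $n$, which by fact (2) is a cyclotomic unit of $L=\Q(\xi)$ precisely when $\Phi_n(-1)=\pm1$; this breaks down in the recorded degenerate cases $\varepsilon_0=2$ (for $n=1$, i.e. $d=1$, $r=0$) and $\varepsilon_0=0$ (for $n=2$, i.e. $d=2$, $r=0$), and more generally, when $d=2$, one gets $\varepsilon_0=1-\zeta_r$, once again a non-unit.

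I do not anticipate a real obstacle here. The two points that need a line of care are the coprimality to $p$ of the exponent $1+kp^{r-1}$ over the range $0\leq k<p-1$ — this is exactly what forces the exclusion of $k=p-1$ and underlies every claim that $\omega$ has order $n$ — and, in part (a), the short explicit list of $n$ for which $\Phi_n(-1)\neq\pm1$. Everything else is routine manipulation of the identity $1+\rho=(1-\rho^2)/(1-\rho)$ and of the generator $1-\rho$ of the prime over a ramified $\ell$.
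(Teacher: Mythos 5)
The paper offers no proof (it is ``left to the reader''), so there is no authorial argument to compare against; your route via the order of $\omega=\xi\zeta^k$ and the identity $1+\rho=(1-\rho^2)(1-\rho)^{-1}$ is the natural one. Your treatment of part (b) is correct and complete: the coprimality of $1+kp^{r-1}$ to $p$ over the range $0\le k<p-1$, the determination of the orders of $\omega$ and $\omega^2$, and the two quoted facts about $1-\rho$ all check out and deliver (b)(i), (b)(ii), (b)(iii) exactly as stated.

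Part (a), however, contains a genuine gap --- indeed your own criterion shows that the statement you set out to prove is false as printed. You correctly reduce (a) to the condition $\Phi_n(-1)=\pm1$, but then assert that this fails only in the recorded cases $n=1$ and $n=2$ (plus $d=2$, $r\ge 1$). That is not so: $\Phi_{2^a}(-1)=2$ for $a\ge 2$, and $\Phi_{2\ell^b}(-1)=\ell$ for any odd prime $\ell$. Both families occur within the hypotheses of the lemma when $r=0$: for $d=n=4$ one gets $\varepsilon_0=1+i$ of norm $2$, and for $d=n=6$ (allowed since $p>3$) one gets $N_{L/\Q}(1+\xi)=3$; in neither case is $\varepsilon_0$ a unit, although $d\notin\{1,2\}$. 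So the list of exceptions in (a) is incomplete, and your write-up inherits the error by silently identifying ``$\Phi_n(-1)\ne\pm1$'' with ``$n\in\{1,2\}$''. The correct form of (a) is that $\varepsilon_0$ is a cyclotomic unit of $L$ unless $n\in\{1,2\}$, $n=2^a$ with $a\ge2$, or $n=2\ell^b$ with $\ell$ prime (in the last two families $\varepsilon_0$ generates a power of the prime over $2$, resp.\ over $\ell$). This does not propagate into the rest of the paper, which only ever uses $\varepsilon_k$ for $k\ge1$, where your proof of (b) stands; but the proof of (a) cannot be completed as claimed without amending the statement.
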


\smallskip

The following lemma using Hilbert class field theory for  $K$ plays a
central role in the article.

\smallskip

\begin{lem}\label{l2}
Suppose that $SFLT2$ fails for  $(p,u,v)$. 
Let  $q\not|\ puv$ be a $p$-principal prime,  
$n$ the order of $\frac{v}{u}\mod q$, $\xi:=e^{\frac{2\pi i}{n}}$  and 
$\mk q$   the prime ideal $(u\xi-v, q)$ of  $\Z_L$.
Then 
$$\Big(\frac{1+\xi \zeta^k}{\mk Q}\Big)_{\!\! M}= \Big(\frac{u}{\mk
q_K}\Big)_{\!\! K}^{\!\!-1}
\mbox{\ for all\ }k=1,\ldots,p-2 \mbox{\  and all \ }\mk Q|\mk q \mbox{\ with \ } \mk q_K\mbox{\ under\ } \mk Q.
\footnote{Observe that $\big(\frac{u}{\mk q_K}\big)_{\!\!  K}$ does
not depend on $k$ 
and recall that  $\mk Q=\mk q$ if $r>0$.}$$  
\begin{proof}
Let us choose one $\mk Q$ over $\mk q$ with $\mk q_K$ under $\mk Q$ and observe what happens when 
$k$ varies thorough $1,\dots,p-1$:

- We have $u\xi- v\equiv 0 \mod \mk q$, so $u\xi - v\equiv 0\mod  \mk Q$,  hence 
with   $\gamma := u+\zeta  v$, we get
$s_k(\gamma)=u+\zeta^k v\equiv u(1+\xi\zeta^k)\equiv u \varepsilon_k\mod  
\mk Q$ for $k=1,\dots,p-1$.

- We obtain 
$\big(\frac{s_k(\gamma)}{\mk Q}\big)_{\!\! M}= 
\big(\frac{u}{\mk Q}\big)_{\!\! M}
\big(\frac{\varepsilon_k}{\mk Q}\big)_{\!\! M},$
so 
$\big(\frac{s_k(\gamma)}{\mk q_K}\big)_{\!\! K}= 
\big(\frac{u}{\mk q_K}\big)_{\!\! K}
\big(\frac{\varepsilon_k}{\mk Q}\big)_{\!\! M}.$

-  The numbers $s_k(\gamma)$ are $p$-primary pseudo-units, which implies 
 $\big(\frac{s_k(\gamma)}{\mk q_K}\big)_{\!\! K}=1$  for all $k\not\equiv
0\mod  p$ 
from the  decomposition of primes in Hilbert class fields
 because, by assumption,  $q$ is
$p$-principal.

- We can reiterate the same reasoning for all $\mk Q$ over $\mk q$.

\end{proof}
\end{lem}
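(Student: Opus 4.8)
The plan is to pass to a prime $\mk Q$ of $M$ above $\mk q$, express $s_k(\gamma)$ modulo $\mk Q$ in terms of $\varepsilon_k = 1+\xi\zeta^k$, and then use $p$-Hilbert class field theory to annihilate the $p$th power residue symbol of $s_k(\gamma)$ at $\mk q_K$. Since $SFLT2$ is assumed to fail for $(p,u,v)$, the number $\gamma = u+v\zeta$ is a $p$-primary pseudo-unit of $\Z_K$, as recalled just before the lemma; because $\mk p$ is stable under $g$, every conjugate $s_k(\gamma) = u+v\zeta^k$ is likewise a $p$-primary pseudo-unit.

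First I would fix $\mk Q \mid \mk q$ and let $\mk q_K$ be the prime of $K$ under $\mk Q$. From $u\xi - v \in \mk q \subseteq \mk Q$ one gets $v \equiv u\xi \pmod{\mk Q}$, hence
\[ s_k(\gamma) = u+v\zeta^k \equiv u\,(1+\xi\zeta^k) = u\,\varepsilon_k \pmod{\mk Q}\qquad (k=1,\dots,p-1). \]
By Lemma \ref{l1}, for $1\le k\le p-2$ the element $\varepsilon_k$ is either a cyclotomic unit or a generator of the prime above $p$ in its field, so $\varepsilon_k$, like the rational integer $u$, is prime to $q$ and hence to $\mk Q$. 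Consequently $s_k(\gamma)$ is prime to $\mk Q$ (in particular to $\mk q_K$), all the symbols below are defined, and multiplicativity of the $p$th power residue symbol (Definition \ref{d1103061}) gives $\Big(\frac{s_k(\gamma)}{\mk Q}\Big)_M = \Big(\frac{u}{\mk Q}\Big)_M\Big(\frac{\varepsilon_k}{\mk Q}\Big)_M$. Since $q\nmid n$, the prime $\mk q_K$ splits completely in $M/K$, so for $s_k(\gamma),u\in K$ the symbol at $\mk Q$ in $M$ coincides with the symbol at $\mk q_K$ in $K$; thus $\Big(\frac{s_k(\gamma)}{\mk q_K}\Big)_K = \Big(\frac{u}{\mk q_K}\Big)_K\Big(\frac{\varepsilon_k}{\mk Q}\Big)_M$.

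The crucial — and, I expect, most delicate — step is the vanishing $\Big(\frac{s_k(\gamma)}{\mk q_K}\Big)_K = 1$. The mechanism: for any $p$-primary pseudo-unit $\beta$ prime to $\mk q_K$, the extension $K(\sqrt[p]{\beta})/K$ is unramified outside $p$ (since $\beta\Z_K$ is the $p$th power of an ideal, $\beta$ is, up to $p$th powers, a local unit at every prime $\mk l\nmid p$, and adjoining a $p$th root of a unit at such a prime is unramified) and unramified at $\mk p$ (this is precisely the meaning of $p$-primary); as $K$ is totally complex, $K(\sqrt[p]{\beta})$ then lies in the Hilbert class field of $K$. By the Artin reciprocity law, the Frobenius of $\mk q_K$ in ${\rm Gal}(K(\sqrt[p]{\beta})/K)$ is the image of the class $c\ell_K(\mk q_K)$; since $q$ is $p$-principal, $c\ell_K(\mk q_K)$ is a $p$th power in $C\ell_K$, so its image in a group of exponent dividing $p$ is trivial, the Frobenius fixes $\sqrt[p]{\beta}$, and $\Big(\frac{\beta}{\mk q_K}\Big)_K = 1$. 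Applying this with $\beta = s_k(\gamma)$ and substituting into the previous display yields $\Big(\frac{\varepsilon_k}{\mk Q}\Big)_M = \Big(\frac{u}{\mk q_K}\Big)_K^{-1}$ for $k=1,\dots,p-2$.

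Finally, the prime $\mk Q$ above $\mk q$ was arbitrary, so running the identical computation for each $\mk Q \mid \mk q$ — with the prime of $K$ lying under it — gives the asserted identity for all such $\mk Q$. I remark that the argument actually covers $k=p-1$ as well whenever $\varepsilon_{p-1}\neq 0$; it is only the degenerate case $n=2p$ (where $\xi=-\zeta$, so $\varepsilon_{p-1}=1+\xi\zeta^{p-1}=1-\zeta^{p}=0$ and the symbol is undefined) that forces the statement to stop at $k=p-2$, as promised in the footnote.
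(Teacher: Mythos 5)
Your proposal is correct and follows essentially the same route as the paper: reduce $s_k(\gamma)\bmod\mk Q$ to $u\varepsilon_k$, use multiplicativity and the total splitting of $\mk q_K$ in $M/K$, and kill $\big(\frac{s_k(\gamma)}{\mk q_K}\big)_{K}$ via the Hilbert class field because $s_k(\gamma)$ is a $p$-primary pseudo-unit and $q$ is $p$-principal. You merely fill in the details the paper leaves implicit (why $K(\sqrt[p]{s_k(\gamma)})/K$ is unramified and why the Frobenius is trivial for a $p$-principal prime), and your closing remark about $k=p-1$ versus the degenerate case $n=2p$ agrees with the paper's Remark \ref{r1p1}.
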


\begin{rema}\label{r1p1}
{\rm We explain why we can discard the value $k=p-1$ of the index $k$.
The value $k=p-1$ is excluded  because $\varepsilon_k$ would
be null if and only if  $d=2,r=1$ and $k=p-1$.
This particular case shall be directly examined in the corollary \ref{c4}.
For all the other $(d,r,k),\ 1\leq k\leq p-1$ with  $\varepsilon_k\not=0$,
we show  now
that it is always possible to
express $\big(\frac{1+\xi\zeta^{p-1}}{\mk Q}\big)_{\!\! M}$ in function of
$\big(\frac{1+\xi\zeta^{k}}{\mk Q}\big)_{\!\! M},\ k=1,\dots,p-2$:
we start from $$u(1+\xi\zeta^j)\equiv s_j(\gamma)\mod\mk q,\mbox{\ for\ }
j=1,\dots,p-1,$$
where $1+\xi\zeta^j$ is always
nonzero, therefore 
$$u^{p-1}\prod_{j=1}^{p-1} (1+\xi\zeta^j)\equiv N_{K/\Q}(\gamma)=w_1^p\mod
\mk q,$$ 
so 
$$\Big(\frac{u^{p-1}(1+\xi\zeta)\ldots(1+\xi
\zeta^{p-2})(1+\xi\zeta^{p-1})}{\mk Q}\Big)_{\!\! M}=1
\ \mbox{\ for all\ } \mk Q|\mk q$$
so, from lemma \ref{l2} applied for all $1\leq k\leq p-2$ we get
$\Big(\frac{u(1+\xi\zeta^{p-1})}{\mk Q}\Big)_{\!\! M}=1,$
and thus $$\Big(\frac{1+\xi\zeta^{p-1}}{\mk Q}\Big)_{\!\!
M}=\Big(\frac{1+\xi\zeta^k}{\mk Q}\Big)_{\!\! M},
\ \mbox{\ for all\ } k=1,\dots,p-2.$$}
\end{rema}

\subsection {The general case}

Our main results on $SFLT2$ are a direct consequence of the lemma \ref{l2}  dealing with the  
decomposition of $\mk Q$ in a certain Kummer $p$-extension defined from 
the Vandiver's cyclotomic units. They will follow from:

\begin{thm}\label{t1304041}
Assume  that $SFLT2$ fails for  $(p,u,v)$. Let $q
\not|\ puv$  be a $p$-principal
prime, $n$ the order of  $\frac{v}{u}\mod q$, $\xi:=e^{\frac{2\pi
i}{n}}$ and $M:=\Q(\xi,\zeta)$.
For all $\ 0\leq k <p-1$,  let 
$\varepsilon_k:= 1+\xi\zeta^k$ and      ${\mathfrak q}$ be the prime ideal $(q, u\,\xi - v)$ of $\Z[\xi]$ over  $q$.

Then all the prime ideals   $\mk Q$ of $\Z[\xi,\zeta]$ over $\mk q$
split  totally 
\footnote{Recall that $\mk Q=\mk q$ if $p\ |\ n$.} 
in   the Kummer extension 
$$M \Big(\sqrt[p]{<\varepsilon_k\varepsilon_1^{-1}>_{k=1,\ldots,p-2}}\,
\Big) \big/M.$$

\begin{proof} It is a reformulation of the previous lemma \ref{l2}
where  $\big(\frac{\varepsilon_k}{\mk Q}\big)_{\!\! M}=
\big(\frac{\varepsilon_1}{\mk Q}\big)_{\!\! M}$, for $k=1,\dots,p-2$.
\end{proof}
\end{thm}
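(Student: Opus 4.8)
The plan is to translate the congruence supplied by Lemma \ref{l2} into a splitting statement about prime ideals in a Kummer extension, using the standard dictionary between $p$th power residue symbols and the decomposition law in $M(\sqrt[p]{\alpha})/M$. Recall that, for $\alpha\in M^\times$ prime to $\mk Q$, the symbol $\big(\frac{\alpha}{\mk Q}\big)_{\!\!M}$ equals $1$ if and only if $\alpha$ is a local $p$th power at $\mk Q$, and, since $\zeta\in M$ so that $M(\sqrt[p]{\alpha})/M$ is a Kummer extension, this holds if and only if $\mk Q$ splits completely in $M(\sqrt[p]{\alpha})/M$. I would first record this equivalence explicitly, citing Definition \ref{d1103061} and [Gr1].

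Next I would combine the equalities obtained in Lemma \ref{l2}. That lemma gives, for every $\mk Q\mid\mk q$ and for all $k=1,\dots,p-2$,
$$\Big(\frac{\varepsilon_k}{\mk Q}\Big)_{\!\!M}=\Big(\frac{u}{\mk q_K}\Big)_{\!\!K}^{\!\!-1},$$
where the right-hand side is independent of $k$. Hence for any two indices $j,k\in\{1,\dots,p-2\}$ one has $\big(\frac{\varepsilon_k}{\mk Q}\big)_{\!\!M}=\big(\frac{\varepsilon_j}{\mk Q}\big)_{\!\!M}$, and in particular, taking $j=1$ and using multiplicativity of the symbol together with $\big(\frac{\varepsilon_1^{-1}}{\mk Q}\big)_{\!\!M}=\big(\frac{\varepsilon_1}{\mk Q}\big)_{\!\!M}^{-1}$, we get
$$\Big(\frac{\varepsilon_k\varepsilon_1^{-1}}{\mk Q}\Big)_{\!\!M}=1\qquad\text{for all }k=1,\dots,p-2\text{ and all }\mk Q\mid\mk q.$$
So each generator $\varepsilon_k\varepsilon_1^{-1}$ of the subgroup $\langle\varepsilon_k\varepsilon_1^{-1}\rangle_{k=1,\dots,p-2}$ of $M^\times/(M^\times)^p$ is a local $p$th power at every $\mk Q$ above $\mk q$. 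By the equivalence recalled in the first paragraph, $\mk Q$ splits completely in each $M(\sqrt[p]{\varepsilon_k\varepsilon_1^{-1}})/M$; since the compositum $M\big(\sqrt[p]{\langle\varepsilon_k\varepsilon_1^{-1}\rangle_{k=1,\dots,p-2}}\big)$ is generated over $M$ by these radicals, $\mk Q$ splits completely in the compositum as well. This is exactly the assertion of the theorem.

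One small point I would make sure to address is the footnote case $p\mid n$, where $M=L$ and $\mk Q=\mk q$: the argument is identical, Lemma \ref{l2} already covering both $r=0$ and $r\geq 1$. I do not expect a serious obstacle here — the theorem is genuinely a reformulation of Lemma \ref{l2}, and the only thing requiring care is the clean passage from "the symbol is $1$ for every $\mk Q\mid\mk q$" to "$\mk q$ (or $\mk Q$) splits completely in the compositum," which is the multiplicativity of the Kummer splitting law over a finite set of radical generators. The harder mathematics has already been spent in Lemma \ref{l2}, namely the fact that the $s_k(\gamma)$ are $p$-primary pseudo-units and that $p$-principality of $q$ forces $\big(\frac{s_k(\gamma)}{\mk q_K}\big)_{\!\!K}=1$ via decomposition in the $p$-Hilbert class field.
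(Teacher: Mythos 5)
Your proposal is correct and follows exactly the paper's (very terse) proof: it deduces from Lemma \ref{l2} that $\big(\frac{\varepsilon_k}{\mk Q}\big)_{\!\!M}=\big(\frac{\varepsilon_1}{\mk Q}\big)_{\!\!M}$ for $k=1,\dots,p-2$, hence $\big(\frac{\varepsilon_k\varepsilon_1^{-1}}{\mk Q}\big)_{\!\!M}=1$, and then invokes the standard equivalence between triviality of the $p$th power residue symbol and total splitting in the Kummer extension. You merely make explicit the dictionary that the paper leaves implicit, so there is nothing further to add.
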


\begin{cor}\label{c1305221}
Assume  that $SFLT2$ fails for  $(p,u,v)$. Let $q\not|\ puv$  be a $p$-principal
prime, $n$ the order of  $\frac{v}{u}\mod q$, $\xi:=e^{\frac{2\pi i}{n}}$ and      ${\mathfrak q}$ be the prime ideal $(q, u\,\xi - v)$ of $\Z[\xi]$ over  $q$.

If $n\not=2p$ then
\be\label{e1305231}
\Big(\frac{1+\xi\zeta^k}{1+\xi\zeta}\Big)^{\frac{q^f-1}{p}}\equiv 1\mod \mk q\mbox{\ for all\ }k=1,\dots,p-1.
\ee
\begin{proof} $ $
\bn 
\item  If $p\ |\ n$ then we have $\mk q=\mk Q$ and, from theorem \ref{t1304041},  we get  for the pair  $(\mk q_K, \mk Q)$ with
$\mk q_K$ under $\mk Q$  
$$\Big(\frac{1+\xi\zeta^k}{1+\xi\zeta}\Big)^{\frac{q^f-1}{p}}\equiv 1\mod \mk q\mbox{\ for all \ } k=1,\dots,p-1. 
\footnote{In fact, we can consider  all $k=1,\dots,p-1$ and not only $k=1,\dots,p-2$ from remark \ref{r1p1} except in the case where $n=2p$
which is examined  directly in corollary \ref{c4}.}$$
\item  If $p\! \not |\ n$ then for  all $k=1,\dots,p-1,$   we have 
$$\Big(\frac{1+\xi\zeta^k}{1+\xi\zeta}\Big)^{\frac{q^f-1}{p}}\equiv 1\mod \mk Q \mbox{\ for all\ }\mk Q\ |\ \mk q,$$ 
because  for each   triple $(\mk q_K, \mk q, \mk Q)$ with $\mk Q$ over $\mk q$ and $\mk q_K$ under $\mk Q$
we have $\big(\frac{u}{\mk q_K}\big)_{\!\! K}\big(\frac{1+\xi\zeta^k}{\mk Q}\big)_{\!\! M}
=\big(\frac{u}{\mk q_K}\big)_{\!\! K}\big(\frac{1+\xi\zeta}{\mk Q}\big)_{\!\! M}=1.$
\en
\end{proof}
\end{cor}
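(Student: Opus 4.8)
The plan is to deduce Corollary \ref{c1305221} directly from Theorem \ref{t1304041} by unwinding what ``$\mk Q$ splits totally in the Kummer extension'' means in terms of the $p$th power residue symbols of Definition \ref{d1103061}. Recall that a prime $\mk Q$ of $M$ splits totally in $M(\sqrt[p]{\beta})/M$ precisely when $\beta$ is a local $p$th power at $\mk Q$, i.e.\ when $\big(\frac{\beta}{\mk Q}\big)_{\!\!M}=1$; since this holds for every generator of the group $\langle \varepsilon_k\varepsilon_1^{-1}\rangle_{k=1,\dots,p-2}$, Theorem \ref{t1304041} gives $\big(\frac{\varepsilon_k\varepsilon_1^{-1}}{\mk Q}\big)_{\!\!M}=1$, equivalently $(\varepsilon_k\varepsilon_1^{-1})^{(q^f-1)/p}\equiv 1\mod\mk Q$, for all $k=1,\dots,p-2$ and all $\mk Q\mid\mk q$. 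This is exactly congruence \eqref{e1305231} for $k=1,\dots,p-2$, read modulo $\mk Q$; because the assertion of the corollary is stated modulo $\mk q$ rather than modulo $\mk Q$, I would split into the two cases exactly as the authors do.

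First I would handle the case $p\mid n$ (i.e.\ $r\geq 1$). Here the Notations section records that $M=L$ and $\mk Q=\mk q$, so ``modulo $\mk Q$'' and ``modulo $\mk q$'' coincide and there is nothing extra to check: Theorem \ref{t1304041} immediately yields $(1+\xi\zeta^k)^{(q^f-1)/p}\equiv (1+\xi\zeta)^{(q^f-1)/p}\mod\mk q$ for $k=1,\dots,p-2$. To upgrade this to the full range $k=1,\dots,p-1$ I would invoke Remark \ref{r1p1}, which expresses $\big(\frac{1+\xi\zeta^{p-1}}{\mk Q}\big)_{\!\!M}$ in terms of the symbols for $k=1,\dots,p-2$ via the norm relation $u^{p-1}\prod_{j=1}^{p-1}(1+\xi\zeta^j)\equiv N_{K/\Q}(\gamma)=w_1^p\mod\mk q$; since $w_1^p$ is a $p$th power its symbol is trivial, and Lemma \ref{l2} for $k=1,\dots,p-2$ forces $\big(\frac{1+\xi\zeta^{p-1}}{\mk Q}\big)_{\!\!M}=\big(\frac{1+\xi\zeta^{k}}{\mk Q}\big)_{\!\!M}$. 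The one configuration in which $\varepsilon_{p-1}=0$ and this argument breaks down is $d=2,\ r=1$, i.e.\ $n=2p$, which is precisely the case excluded by hypothesis; the authors defer it to Corollary \ref{c4}.

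Next I would treat $p\nmid n$ (i.e.\ $r=0$, $M$ of degree $p-1$ over $L$). Now there may be several $\mk Q\mid\mk q$, but the point is that the conclusion needs only to be read modulo $\mk q$: for each triple $(\mk q_K,\mk q,\mk Q)$ with $\mk Q$ over $\mk q$ and $\mk q_K$ under $\mk Q$, Lemma \ref{l2} gives $\big(\frac{u}{\mk q_K}\big)_{\!\!K}\big(\frac{1+\xi\zeta^k}{\mk Q}\big)_{\!\!M}=1=\big(\frac{u}{\mk q_K}\big)_{\!\!K}\big(\frac{1+\xi\zeta}{\mk Q}\big)_{\!\!M}$, whence the symbols of $1+\xi\zeta^k$ and $1+\xi\zeta$ agree at every $\mk Q\mid\mk q$, so their ratio is $\equiv 1\mod\mk Q$ for all such $\mk Q$, hence $\equiv 1\mod\mk q$. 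Again Remark \ref{r1p1} extends $k$ from $p-2$ to $p-1$, using that $n\neq 2p$ is automatic when $r=0$ unless $d=2p$, which is not coprime to $p$ — so $\varepsilon_{p-1}\neq 0$ throughout this case. Combining the two cases gives \eqref{e1305231} in full.

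I do not expect a serious obstacle here, since the corollary is essentially a translation of the splitting statement of Theorem \ref{t1304041} into congruences; the only delicate point is bookkeeping the distinction between reductions mod $\mk q$ and mod $\mk Q$ when $p\nmid n$ and $M/L$ is nontrivial, and confirming that the excluded case $n=2p$ is exactly the one where $\varepsilon_{p-1}=0$ so that the Remark \ref{r1p1} extension to $k=p-1$ is legitimate. Everything else is a direct appeal to multiplicativity of the $p$th power residue symbol and to Lemma \ref{l2}.
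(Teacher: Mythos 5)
Your proposal is correct and follows essentially the same route as the paper: it unwinds the total-splitting statement of Theorem \ref{t1304041} into the residue-symbol identities of Lemma \ref{l2}, splits into the cases $p\mid n$ (where $\mk Q=\mk q$) and $p\nmid n$ (where one argues at every $\mk Q\mid\mk q$), and uses Remark \ref{r1p1} to extend from $k\le p-2$ to $k=p-1$, with $n=2p$ excluded precisely because $\varepsilon_{p-1}=0$ there. Your added checks (that the mod-$\mk Q$ congruences for all $\mk Q\mid\mk q$ assemble into the mod-$\mk q$ statement, and that $\varepsilon_{p-1}\neq 0$ when $r=0$) only make explicit what the paper leaves implicit.
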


\begin{rema} $ $
{\rm 
\bn
\item
The relation (\ref{e1305231}) is equivalent to :  for all  prime ideals $\mk Q$ of $M=\Q(\xi,\zeta)$ over $\mk q$
then $\mk Q$ splits totally in the $p$-Kummer extension $M \big(\sqrt[p]{<(1+\xi\zeta^k)/(1+\xi\zeta)>_{k=1,\ldots,p-2}}\,\big) \big/M.$
\item
Observe that lemma \ref{l2} implies similarly that:
\bn
\item 
if $\big(\frac{u}{\mk q_K}\big)_{\!\! K}=1$ then 
all the prime ideals   $\mk Q$ of $\Z[\xi,\zeta]$ dividing $\mk q$
split totally 
in   the Kummer extension 
$M \big(\sqrt[p]{<(1+\xi\zeta^k>_{k=1,\ldots,p-2}}\,\big) \big/M.$
\item 
if $\big(\frac{u}{\mk q_K}\big)_{\!\! K}\not=1$ then 
all the prime ideals   $\mk Q$ of $\Z[\xi,\zeta]$ dividing $\mk q$
are  inert 
in   the Kummer extension 
$M \big(\sqrt[p]{<(1+\xi\zeta^k>_{k=1,\ldots,p-2}}\,\big) \big/M.$
\en 
\en }
\end{rema}

\begin{cor}\label{c1304041}
Assume  that Vandiver conjecture holds for $p$ and that  $SFLT2$ fails for  $(p,u,v)$. Let $q\not=p$
be a prime whose order $f \mod p$ is even and such that $p$ does not divide $\frac{ q^f-1}{p}$.
Let  $n$ be  the order of  $\frac{v}{u}\mod q$,  $\xi:=e^{\frac{2\pi i}{n}}$  and  $\mk q$ be  the  prime ideal    $(q, u\,\xi - v)$ of $\Z[\xi]$ over  $q$.
If $n\not=2p$ then, either 
\be\label{e1305232}
\Big(\frac{1+\xi\zeta^k}{1+\xi\zeta}\Big)^{\frac{q^f-1}{p}}\equiv 1\mod \mk q\mbox{\ for all\ }k=1,\dots,p-1,
\ee
or $q$ divides $v$.
\begin{proof}

From the first theorem of Furtw\"angler for SFLT, see [GQ] cor 2.15 (i) and the assumption $\frac{q^f-1}{p}\not\equiv 0$, we derive that
$q$ does not divide $u$. The order of $q\mod p$ is even, so
$\big(\frac{u}{\mk q_K}\big)_{\!\! K}=1$ and $\mk q_K$ is $p$-principal because Vandiver's conjecture holds for $p$.
  Then we  apply corollary  \ref{c1305221}.
\end{proof}
\end{cor}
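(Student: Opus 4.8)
The plan is to reduce Corollary~\ref{c1304041} to Corollary~\ref{c1305221} by establishing its two hypotheses: that $q$ does not divide $u$, and that $\mk q_K$ is $p$-principal. First I would invoke the first theorem of Furtw\"angler for $SFLT$, as recorded in [GQ, Cor.\,2.15(i)]: this gives a congruence of the form $q^{f-1}\equiv 1\mod p$ (equivalently, $p\mid \frac{q^f-1}{p}$) whenever $q$ divides $u$. Since we have assumed $p\nmid\frac{q^f-1}{p}$, the contrapositive forces $q\nmid u$; thus $u$ is a unit modulo $\mk q_K$ and the symbol $\big(\frac{u}{\mk q_K}\big)_{\!\!K}$ is well defined.

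Next I would show this symbol equals $1$. The key observation is that $u\in\Q$, so its image in the residue field $\Z_K/\mk q_K\simeq\mf F_{q^f}$ actually lies in the prime field $\mf F_q$, and in fact in $\mf F_{q^{\gcd(f,1)}}$; more precisely $\ov u\in\mf F_q^\times$, so $\ov u^{\,q-1}=1$. The residue symbol is $\ov u^{\,(q^f-1)/p}$, and since the order $f$ of $q\bmod p$ is even, $q\equiv -1$ or more generally $p\mid q^{f/2}+1$ need not hold, but in any case $f$ even gives $q-1\mid \frac{q^f-1}{p}$: indeed $\frac{q^f-1}{q-1}=1+q+\cdots+q^{f-1}$ is divisible by $p$ precisely when $f=\mathrm{ord}_p(q)$ and one checks $\frac{q^f-1}{p}=(q-1)\cdot\frac{1}{p}\cdot\frac{q^f-1}{q-1}$ is an integer multiple of $q-1$ when $f$ is even because then $\frac{q^f-1}{q-1}=(q^{f/2}+1)\frac{q^{f/2}-1}{q-1}$ and $p\nmid q-1$ (as $f>1$), so $p$ divides the second factor $(q^{f/2}+1)\frac{q^{f/2}-1}{q-1}$, leaving $\frac{q^f-1}{p}$ divisible by $q-1$. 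Hence $\ov u^{\,(q^f-1)/p}=(\ov u^{\,q-1})^{(q^f-1)/(p(q-1))}=1$, so $\big(\frac{u}{\mk q_K}\big)_{\!\!K}=1$.

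With $q\nmid u$ established and Vandiver's conjecture assumed for $p$ (so that the minus part of the class group carries the $p$-class group and every prime above $q$ of even residue degree is $p$-principal — this is the standard consequence that $\big(\frac{u}{\mk q_K}\big)_{\!\!K}=1$ together with Vandiver forces $\mk q_K$ to be $p$-principal, since a prime whose Frobenius acts trivially on $\mu_p$-cohomology of the cyclotomic units and whose relevant symbols vanish must have $p$-power class), I would conclude that $q$ is $p$-principal. Then Corollary~\ref{c1305221} applies verbatim with the same $n,\xi,\mk q$, and since we are in the case $n\neq 2p$ it yields the congruence~(\ref{e1305232}); the only escape is that one of the preliminary steps fails, and the single place that can fail is the hypothesis $q\nmid u$ from Furtw\"angler — but that was already forced — so the genuine alternative packaged into the statement is that $q\mid v$ (which is the case $q\mid uv$ excluded from $p$-principality considerations via the order $n$ being a multiple of... no: if $q\mid v$ then $\frac{v}{u}\equiv 0$ and $n$ is undefined, so the hypotheses of the corollary implicitly presuppose $q\nmid v$, and the ``or $q$ divides $v$'' clause simply records that the argument says nothing in that degenerate case).

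The main obstacle, and the step I would scrutinize most carefully, is the passage from ``$\big(\frac{u}{\mk q_K}\big)_{\!\!K}=1$ and Vandiver holds'' to ``$\mk q_K$ is $p$-principal.'' This is not automatic: being a local $p$th power for one unit $u$ does not by itself make an ideal's class a $p$th power. The correct justification must route through the structure of the $p$-class group under Vandiver's conjecture — namely that $C\ell$ is generated by classes of primes lying in the plus part is false; rather one uses that Vandiver forces $C\ell^+=1$, and then analyzes in which component of $C\ell^-$ the class of $\mk q_K$ can lie, using that the even order $f$ of $q$ kills certain eigencomponents. I would need to make precise (perhaps by reference to a reflection-type argument or to the Spiegelungssatz) exactly why even residue degree plus Vandiver yields $p$-principality; if the paper's earlier machinery ([GQ] or the cited Gras references) already packages this, the proof is as short as written, but if not, this is where the real content hides and where I would expect to spend essentially all the effort.
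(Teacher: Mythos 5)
Your proposal follows the paper's proof step for step: Furtw\"angler's first theorem for $SFLT$ together with $p\nmid\frac{q^f-1}{p}$ gives $q\nmid u$; then $\big(\frac{u}{\mk q_K}\big)_{\!\!K}=1$; then Vandiver plus even order gives $p$-principality; then Corollary~\ref{c1305221} applies (and the clause ``or $q\mid v$'' merely covers the degenerate case where $n$ is undefined). The one step you explicitly could not complete --- that Vandiver plus even residue degree forces $\mk q_K$ to be $p$-principal --- is true and has a short direct proof needing no reflection theorem or Spiegelungssatz: since $f$ is even, the decomposition subgroup $\langle q\rangle\subset(\Z/p\Z)^\times$ is the unique cyclic subgroup of order $f$ and therefore contains $-1$, so complex conjugation $s_{-1}$ fixes the ideal $\mk q_K$ and hence fixes its class in the $p$-class group $C\ell$; Vandiver's conjecture gives $C\ell^{+}=1$, so $s_{-1}$ acts on $C\ell=C\ell^{-}$ by inversion, whence $c\ell(\mk q_K)=c\ell(\mk q_K)^{-1}$ and, $C\ell$ having odd order, $c\ell(\mk q_K)=1$. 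Your first proposed mechanism (``Frobenius acts trivially on $\mu_p$-cohomology of the cyclotomic units and the relevant symbols vanish'') is not a valid criterion for $p$-principality and should be discarded; your second guess (locate the class in the eigencomponents of $C\ell^{-}$) is the right idea but is completed by the observation above. One further correction: the vanishing of $\big(\frac{u}{\mk q_K}\big)_{\!\!K}$ uses only $f>1$ (so that $p\nmid q-1$, hence $q-1$ divides $\frac{q^f-1}{p}$ and $\ov u^{(q^f-1)/p}=1$); the parity of $f$ is needed only for the class-group argument, not for this symbol computation.
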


\begin{rema}\label{r1305241} $ $

\smallskip

{\rm 
\bn
\item 
 If $SFLT2$ fails  for  $(p,u,v)$ with  $p|v$   and
if the $p$-principal prime    $q < p$, the probability is very small, for    the ideal $\mk q$
of $L$ over $q$, to   split totally  in the Kummer extension 
$M \Big(\sqrt[p]{<\varepsilon_k\varepsilon_1^{-1}>_{k=1,\ldots,p-1}}\,
\ \Big) \big/M$: let $p^\delta$ be the degree of this 
Kummer extension;  the probability estimate  that $\mk q$ split totally in
this extension satisfies  $$\mathcal P  \leq \frac{\phi(n)}{p^\delta}\leq \frac{\phi(q-1)}{p^\delta}).$$
\item
As a  consequence, if  $SFLT2$  failed for   $(p,u,v)$  with $p|v$
sufficiently large,  these probability estimates   suggest  that the integer $|v|$ should be
a 
{\it very  large  integer}
 with a very large number of
$p$-principal primes $q|v$ with $\kappa\not\equiv 0\mod p$.
As an example, apply cor. \ref{c1304041} for $p=491$. Assume that $SFLT2$ fails for $p=491$.
The  primes $q<p$ of even degree $\mod p$ with $\kappa\not\equiv 0\mod p$ are : 
$2,7,19,23,29,47,53,59,67,73,89$, $103,109,113,137,149,151,157,167,173,191,193,193,211,251,271,281,283,307,311$,
$313,317,337,347,353,359,367,
373,383,397,421,431,433,439,443,439,479,487$. 
For these primes, we have $\ml P<\frac{1}{p^\delta-1}$ is very small.

We see that there is a large number of primes dividing $v$, a fortiori if we consider 
all such  primes of  even order $\mod p$  for $q< p^\nu$  with for instance  $\nu <\delta-2$. It is reasonable  to  think  that for $p$ sufficiently large 
we have $\delta<\frac{p}{4}$,  so  we can take $\nu<\frac{p}{5}$ to assert that  
{\it more  than half of   all  the  primes $q< p^{\frac{p}{5}}$ of even order $\mod p$   should not satisfy (\ref{e1305232}) and so should  divide $v.$}
We have not found in the FLT literature that $x^p+y^p+z^p=0$ with $p|y$ should imply that $y$ must have a so  very large number 
of small prime factors.
This observation  
 could bring some tools for  another  diophantine tackling of $SFLT2$ and $FLT2$, for instance  in the spirit of Thue, see Ribenboim \cite{rib} (3B) p. 233
or Baker's linear form of logarithm or others.
It is possible to formulate a  corollary  with  an assumption independent of the values $(u,v)$ of any  possible counter-example which should imply   that $SFLT2$  holds for $p$:
\begin{cor}\label{c1305201} Let $p>3$ be a prime.
Assume that 
 there exist infinitely many primes $q$  such that, for $\xi_{q-1}:= e^{\frac{2\pi i}{q-1}}$ and  $L:=\Q(\xi_{q-1})$, 
there is no  prime ideal $\mk q$ of $L $ such that 
$$\Big(\frac{1+\xi_{q-1}\zeta^k}{1+\xi_{q-1}\zeta}\Big)^{\frac{q^f-1}{p}}\equiv 1\mod \mk q\mbox{ \ for all\ }k=1,\dots,p-1.$$
Then $SFLT2$ holds for $p$.
\begin{proof}
Suppose that $SFLT2$ fails for $(p,u,v)$.
There are infinitely many $q$, so there is at least one  $q$ such that $uv\not\equiv 0\mod q$.  
Let $n:=q-1$, then $u^n-v^n\equiv 0\mod q$.  Let $\xi:=e^{\frac{2\pi i}{n}}$ and $L=\Q(\xi)$. There exists a prime 
ideal $\mk q$ of $\Z_L$ such that $u\xi_{q-1}-v\equiv 0\mod \mk q$. 
Then, by corollary \ref{c1304041},  we should have
$\Big(\frac{1+\xi_{q-1}\zeta^k}{1+\xi_{q-1}\zeta}\Big)^{\frac{q^f-1}{p}}\equiv 1\mod \mk q\mbox{\ for all\ }k=1,\dots,p-2,$
contradicting the assumptions  made on $q$.
\end{proof}
\end{cor}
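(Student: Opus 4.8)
The plan is to argue by contradiction, in the spirit of Corollary~\ref{c1304041}. Suppose $SFLT2$ fails for some triple $(p,u,v)$ with $u,v$ coprime and $p|v$, and suppose simultaneously that the hypothesis holds, so that there are infinitely many primes $q$ for which no prime ideal $\mk q$ of $\Q(\xi_{q-1})$ realizes the displayed congruence for all $k=1,\dots,p-1$. I will extract one such prime $q$ and show that it \emph{does} admit a witnessing prime ideal, contradicting its membership in that family.

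For the extraction step I would use that $u$ and $v$ are fixed nonzero integers: only finitely many primes divide $uv$, and only finitely many primes fail the arithmetic side conditions required by Corollary~\ref{c1304041} (resp. Corollary~\ref{c1305221}), namely $q\ne p$, the order $f$ of $q$ modulo $p$ even, $p\not|\ \frac{q^f-1}{p}$, and $n\ne 2p$, where $n$ denotes the order of $\frac{v}{u}$ modulo $q$. Since the hypothesized family is infinite, I can pick a member $q$ of it that avoids all these finitely many exceptions; in particular $q\not|\ puv$.

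With such a $q$ fixed, I would set up the arithmetic at $q$. From $q\not|\ uv$ and Fermat's little theorem one has $q|u^{q-1}-v^{q-1}=\prod_{d|q-1}\Phi_d(u,v)$, hence $q|\Phi_n(u,v)$ with $n$ the exact order of $\frac{v}{u}$ modulo $q$; by [GQ, Lem.~2.11] this is consistent with $q\not|\ n$. Taking $\xi:=e^{2\pi i/n}$ suitably embedded, $\mk q:=(q,u\xi-v)$ is a prime ideal of $\Z[\xi]$ of residue degree one over $q$, sitting under prime ideals of $\Q(\xi_{q-1})$ because $q$ splits completely in that field. Corollary~\ref{c1304041} then applies (it invokes Vandiver's conjecture; alternatively one reroutes through Corollary~\ref{c1305221} after verifying that $q$ is $p$-principal), and since $q\not|\ v$ the alternative ``$q|v$'' is impossible, whence
\[
\Big(\frac{1+\xi\zeta^k}{1+\xi\zeta}\Big)^{(q^f-1)/p}\equiv 1\mod \mk q,\qquad k=1,\dots,p-1 .
\]
Thus $\mk q$, extended to $\Q(\xi_{q-1})$, is a prime ideal realizing the congruence, contradicting the choice of $q$; therefore $SFLT2$ holds for $p$.

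The step I expect to be the genuine obstacle is reconciling the root of unity $\xi_{q-1}$ that occurs in the statement with the root of unity $\xi=\xi_n$, $n|q-1$, that actually governs the factorization $q|\Phi_n(u,v)$ and hence the prime ideal $\mk q$ delivered by Corollary~\ref{c1304041}: the congruence one extracts lives at $\mk q\subset\Z[\xi_n]$ and involves $1+\xi_n\zeta^k$, whereas the hypothesis speaks of $1+\xi_{q-1}\zeta^k$ at a prime of $\Q(\xi_{q-1})$. Identifying the two seems to demand that $n=q-1$ for at least one $q$ in the family, i.e. that $\frac{v}{u}$ be a primitive root modulo $q$ (an Artin-type requirement), or else a reformulation of the hypothesis ranging over all divisors $d|q-1$. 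More broadly, one must argue that the positive-density conditions ($f$ even, $p\not|\ \frac{q^f-1}{p}$, $p$-principality) remain compatible with the assumed infinite family rather than being excluded by it; making this compatibility rigorous is where the proof must be tightened.
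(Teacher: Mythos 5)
Your argument follows the same contradiction scheme as the paper's own proof, and the two obstacles you flag at the end are genuine rather than cosmetic, so the proposal as written does not close the argument. First, the mismatch between $\xi_{q-1}$ and $\xi_n$: for $(q,\,u\xi_{q-1}-v)$ to be a \emph{prime} ideal of $\Z[\xi_{q-1}]$ over $q$ one needs $q\mid \Phi_{q-1}(u,v)$, i.e.\ $\frac{v}{u}$ must be a primitive root modulo $q$; otherwise that ideal is the unit ideal, and the hypothesis of the corollary (which quantifies only over prime ideals of $\Q(\xi_{q-1})$ and only over the elements $1+\xi_{q-1}\zeta^k$) says nothing about the prime $(q,\,u\xi_n-v)$ of the subfield $\Q(\xi_n)$ where Corollary \ref{c1304041} actually produces a congruence. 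The paper's proof does not resolve this either: it sets $n:=q-1$ irrespective of the true order of $\frac{v}{u}$ and simply asserts that a prime ideal $\mk q$ with $u\xi_{q-1}-v\equiv 0 \bmod \mk q$ exists, which is exactly the unproved Artin-type requirement you identified. Second, the side conditions of Corollary \ref{c1304041} (Vandiver's conjecture for $p$, the order $f$ of $q$ modulo $p$ even, $p$ not dividing $\frac{q^f-1}{p}$, $n\not=2p$): your claim that only finitely many primes fail these is false --- the primes $q\equiv 1\bmod p$ already form an infinite set with $f=1$ odd --- so you cannot secure a usable $q$ by discarding finitely many exceptions. One would need the hypothesized infinite family to contain infinitely many $q$ with $f$ even and $q^f\not\equiv 1\bmod p^2$ (or to reroute through Corollary \ref{c1305221}, which instead demands $p$-principality of $q$, again not automatic). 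You correctly signal this at the end, but signalling it is not the same as repairing it; the paper's proof skips this verification silently.

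In short, you have reproduced the paper's strategy and accurately diagnosed where it is incomplete, but your text does not establish the corollary as stated: the selection of $q$ and the identification of the witnessing prime ideal in $\Q(\xi_{q-1})$ both remain open. A correct statement would either add the hypothesis that the family contains infinitely many $q$ satisfying the side conditions of Corollary \ref{c1304041} with $\frac{v}{u}$ a primitive root modulo $q$ (which cannot be phrased independently of $(u,v)$), or reformulate the nonexistence hypothesis so that it ranges over all divisors $n$ of $q-1$ and the corresponding primes of $\Q(\xi_n)$, as you suggest.
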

The existence of an infinity of primes $q$ satisfying assumptions of corollary \ref{c1305201} is an {\it open question}
because $\frac{\phi(q-1)}{p^{p-1}}\ri \infty$ when $q\ri\infty$.
\en}
\end{rema}

\subsection {The case  $n\in\{p,1,2p, 2\}$}\label{s1304081}

In  this  subsection,  we suppose that
$SFLT2$ 
fails for  $(p,u,v)$ and  
we apply the lemma \ref{l2} in  fixing  $n\in\{p,1,2p, 2\}$ to
derive some strong properties of all the $p$-principal primes $q$ dividing
$\Phi_n(u,v)$ for these values of $n$.
Observe that  we have  $M=K$ in all these  cases.

\subsubsection{The two cases $n= p$  and $n=1$.}
The reunion of these  two cases  allows us to investigate the properties
of 
all  the  $p$-principal primes $q$ dividing $u^p-v^p$.
\begin{cor}\label{c2} 
Case $n=p$: suppose that $SFLT2$ fails for  $(p,u,v)$. 
If    $q$ is  a  $p$-principal  prime dividing $\frac{u^p-v^p}{u-v}$  then 
$$q\equiv 1\mod  p^2\mbox{\ and \ }(1+\zeta)^{(q-1)/p}\equiv u^{(q-1)/p}\equiv v^{(q-1)/p}\equiv
2^{(q-1)/p}\equiv 1\mod  q.$$

\begin{proof}
Here $\xi=\zeta$, $u\zeta-v\equiv 0\mod \mk q$, 
$\varepsilon_k=1+\zeta^{k+1}$, $M=L=K$ and $\mk q=\mk Q=\mk q_K$, so  
$\big(\frac{1+\zeta^{k+1}}{\mk q_K}\big)_{\!\! K}=\big(\frac{u}{\mk
q_K}\big)_{\!\! K}^{-1}
\ \ \mbox{\ for all\ } k=1,\dots,p-2$ from lemma \ref{l2}.
It follows that  
$\big(\frac{1+\zeta^{2}}{\mk q_K}\big)_{\!\! K}= 
\big(\frac{1+\zeta^{-2}}{\mk q_K}\big)_{\!\! K}$,
which implies that   $\big(\frac{\zeta}{\mk q_K}\big)_{\!\! K}=1$,
thus $q\equiv 1\mod  p^2$, observing that $q\equiv 1\mod  p$. 

\smallskip
 From  $\big(\frac{\zeta}{\mk q_K}\big)_{\!\! K}=1$, we get 
$\big(\frac{1+\zeta}{\mk q_K}\big)_{\!\! K}= 
\big(\frac{1+\zeta^{p-1}}{\mk q_K}\big)_{\!\! K}$,
so $\big(\frac{1+\zeta^{j}}{\mk q_K}\big)_{\!\! K}=\big(\frac{u}{\mk
q_K}\big)_{\!\! K}^{-1}$,
 for all $j=1,\dots,p-1$.
Thus  
$\big(\frac{1+\zeta}{\mk q_K}\big)_{\!\! K}=\ldots = 
\big(\frac{1+\zeta^{p-1}}{\mk q_K}\big)_{\!\! K}$.
Therefore 
$\Big(\frac{N_{K/\Q}(1+\zeta)}{\mk q_K}\Big)_{\!\!}=
\big(\frac{u^{-1}}{\mk q_K}\big)_{\!\! K}^{p-1}=1$, 
so $\big(\frac{u}{\mk q_K}\big)_{\!\! K}=\big(\frac{v}{\mk q_K}\big)_{\!\!
K}=1,$
and gathering these results we get 
$$\Big(\frac{1+\zeta}{\mk q_K}\Big)_{\!\! K}=\ldots =
\Big(\frac{1+\zeta^{p-1}}{\mk q_K}\Big)_{\!\! K}=\Big(\frac{u}{\mk
q_K}\Big)_{\!\! K}=
\Big(\frac{v}{\mk q_K}\Big)_{\!\! K}=1.$$

\smallskip

From $u\zeta-v\equiv 0\mod \mk q_K$,  
we have $w_1^p=\frac{u^p+v^p}{u+v}\equiv \frac{2u^p}{u(1+\zeta)}\mod \mk
q_K$, 
so $\big(\frac{2}{\mk q_K}\big)_{\!\! K}=1$,
and  finally 
$$\Big(\frac{2}{\mk q_K}\Big)_{\!\! K}=\Big(\frac{1+\zeta}{\mk
q_K}\Big)_{\!\! K}=\dots =
\Big(\frac{1+\zeta^{p-1}}{\mk q_K}\Big)_{\!\! K}=\Big(\frac{v}{\mk
q_K}\Big)_{\!\! K}
=\Big(\frac{u}{\mk q_K}\Big)_{\!\! K}=1. $$ 

By  conjugation by $s_\ell$,  we get 
$\big(\frac{1+\zeta}{s_\ell(\mk q_K)}\big)_{\!\!K}=1$  for any
$\ell\not\equiv 0\mod  p$, thus  
$$(1+\zeta)^{(q-1)/p}\equiv 1\mod  q. $$
\end{proof}
\end{cor}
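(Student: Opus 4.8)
The plan is to feed $n=p$ into Lemma \ref{l2} and then extract all four congruences by purely multiplicative manipulations of the $p$-th power residue symbol. The hypothesis $q\mid\frac{u^p-v^p}{u-v}$ forces the order $n$ of $\frac vu$ modulo $q$ to be exactly $p$, so in Lemma \ref{l2} we have $\xi=\zeta$, $L=\Q(\zeta)=K$, $M=K$ and $\mk q=\mk Q=\mk q_K$, and $p\mid q-1$, hence $f=1$ and $\kappa=\frac{q-1}{p}$. Since here $\varepsilon_k=1+\zeta\cdot\zeta^k=1+\zeta^{k+1}$, Lemma \ref{l2} reads
\[
\Big(\frac{1+\zeta^{j}}{\mk q_K}\Big)_{\!\! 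K}=\Big(\frac{u}{\mk q_K}\Big)_{\!\! K}^{-1}\qquad(j=2,\dots,p-1),
\]
and, because $u\zeta\equiv v\bmod\mk q_K$ by definition of $\mk q$, the reductions of $u$, $v$, $2$ and of the $1+\zeta^{j}$ modulo $\mk q_K$ are all controlled once these symbols are.

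First I would read off $q\equiv1\bmod p^2$. Comparing $j=2$ with $j=p-2\equiv-2$ and writing $1+\zeta^{-2}=\zeta^{-2}(1+\zeta^{2})$, multiplicativity of the symbol forces $\big(\frac{\zeta}{\mk q_K}\big)_{\!\! K}^{2}=1$, hence $\big(\frac{\zeta}{\mk q_K}\big)_{\!\! K}=1$ since $\mu_p$ has odd order. By Definition \ref{d1103061} this means $\zeta^{(q-1)/p}\equiv1\bmod\mk q_K$, i.e.\ $p\mid\frac{q-1}{p}$, i.e.\ $q\equiv1\bmod p^2$.

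Next I would trivialize every symbol involved. With $\big(\frac{\zeta}{\mk q_K}\big)_{\!\! K}=1$, the identity $1+\zeta^{p-1}=\zeta^{-1}(1+\zeta)$ extends the displayed relation to all $j=1,\dots,p-1$; taking the product over $j$ and using $N_{K/\Q}(1+\zeta)=\Phi_p(-1)=1$ gives $\big(\frac{u}{\mk q_K}\big)_{\!\! K}^{-(p-1)}=1$, whence $\big(\frac{u}{\mk q_K}\big)_{\!\! K}=1$ and then $\big(\frac{1+\zeta^{j}}{\mk q_K}\big)_{\!\! K}=1$ for every $j$. From $v\equiv u\zeta\bmod\mk q_K$ one gets $\big(\frac{v}{\mk q_K}\big)_{\!\! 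K}=\big(\frac{u}{\mk q_K}\big)_{\!\! K}\big(\frac{\zeta}{\mk q_K}\big)_{\!\! K}=1$; and reducing $w_1^{p}=\frac{u^p+v^p}{u+v}\equiv\frac{2u^{p-1}}{1+\zeta}\bmod\mk q_K$ (using $v^p\equiv u^p$ and $u+v\equiv u(1+\zeta)$) together with the fact that a $p$-th power has trivial symbol yields $\big(\frac{2}{\mk q_K}\big)_{\!\! K}=1$.

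Finally I would turn symbols into congruences modulo the rational prime $q$. For a rational integer $a$ prime to $q$, $\big(\frac{a}{\mk q_K}\big)_{\!\! K}=1$ is equivalent to $a^{(q-1)/p}\equiv1\bmod q$ (the residue field is $\F_q$, in which the image of $\zeta$ has order $p$), which settles $u$, $v$ and $2$. For $1+\zeta$ one needs the congruence at \emph{every} prime of $\Z[\zeta]$ above $q$: applying $s_\ell$ to $\big(\frac{1+\zeta^{j}}{\mk q_K}\big)_{\!\! K}=1$ and using the Galois-equivariance in Definition \ref{d1103061} gives $\big(\frac{1+\zeta^{\ell j}}{s_\ell\mk q_K}\big)_{\!\! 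K}=1$, so $\big(\frac{1+\zeta}{s_\ell\mk q_K}\big)_{\!\! K}=1$ for each $\ell=1,\dots,p-1$; as the $s_\ell\mk q_K$ exhaust the primes above $q$ and multiply to $q\Z[\zeta]$, the Chinese Remainder Theorem gives $(1+\zeta)^{(q-1)/p}\equiv1\bmod q$. I expect this last descent to be the only delicate point — one must carry the symbol identities across all the conjugate primes with the correct normalization of Definition \ref{d1103061}, rather than argue at the distinguished $\mk q_K$ alone; everything else is a short chain of multiplicative identities and an elementary norm computation.
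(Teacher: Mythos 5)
Your proposal is correct and follows essentially the same route as the paper: apply Lemma \ref{l2} with $\xi=\zeta$, compare the symbols at $j=2$ and $j=-2$ to get $\big(\frac{\zeta}{\mk q_K}\big)_{\!\!K}=1$ and hence $q\equiv1\bmod p^2$, use $N_{K/\Q}(1+\zeta)=1$ to trivialize $\big(\frac{u}{\mk q_K}\big)_{\!\!K}$ and the rest, and finally conjugate by the $s_\ell$ to pass from the distinguished $\mk q_K$ to a congruence modulo $q$. Your explicit remark that rational integers need only one prime above $q$ while $1+\zeta$ requires all conjugates is a slightly more careful articulation of the paper's closing step, but the argument is the same.
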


\begin{cor}\label{c3} 
Case $n=1$: suppose that $SFLT2$ fails for  $(p,u,v)$. 
If    $q$ is    a  $p$-principal prime  of order $f\mod  p$ and  
$q$ divides $u-v$
then we have:
$$q^f\equiv 1\mod  p^2 \mbox{\ and\ }
u^{(q^f-1)/p}\equiv v^{(q^f-1)/p}\equiv (1+\zeta)^{(q^f-1)/p}\equiv   2^{(q^f-1)/p}\equiv 1\mod  q.$$

\begin{proof} $ $ Here $\varepsilon_k=1+\zeta^k$, $M=K$ and $L=\Q$.
 The proof is very similar to the case $n=p$ corollary  \ref{c2} starting
here from 
 the  relation
\be\label{e1305241}
u+\zeta^j v\equiv u(1+\zeta^j)\mod  q,
\ee
for all $j\not\equiv 0\mod  p$
(instead of a congruence  $\mod \  \mk q_K$),  
 observing that 
the degree of $q\mod  p$ can be here greater than $1$.
We have 
$N_{K/\Q}(1+\zeta)=1$
which implies that $\big(\frac{u}{\mk q_K}\big)_{\!\! K}=1$
 and  then  $\frac{u^p+v^p}{u+v}=w_1^p\equiv \frac{2^pu^p}{2u}\mod q$ 
implies $\big(\frac{2}{\mk q_K}\big)_{\!\!K}=1$.
\end{proof}
\end{cor}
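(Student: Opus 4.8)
The plan is to run, almost verbatim, the argument of Corollary~\ref{c2}, now specialised to $n=1$, so that $\xi=e^{2\pi i/1}=1$, $L=\Q$, $M=K$, $\mk Q=\mk q_K$ and $\varepsilon_k=1+\zeta^k$; the one genuinely new feature is that the residue degree $f$ of $q\mod p$ may exceed $1$, so the final conclusions are congruences modulo the \emph{rational} prime $q$ in $\Z_K$ rather than modulo a single $\mk q_K$. First I would record that $q\not|\ u$ cannot fail here and that $q\ |\ u-v$ gives $v\equiv u\mod q$ in $\Z$, hence for every $j\not\equiv 0\mod p$
$$s_j(\gamma)=u+v\zeta^j\equiv u(1+\zeta^j)=u\,\varepsilon_j\mod q\Z_K,$$
a congruence modulo $q$, \emph{a fortiori} modulo every $\mk q_K\,|\,q$. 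The hypotheses of Lemma~\ref{l2} hold ($q\not|\ puv$, $q$ is $p$-principal), so for each $\mk q_K\,|\,q$ and each $k=1,\dots,p-2$ we get $\big(\frac{\varepsilon_k}{\mk q_K}\big)_{\! K}=\big(\frac{u}{\mk q_K}\big)_{\! K}^{-1}$; since $d=1$ here we have $\varepsilon_{p-1}=1+\zeta^{p-1}\neq 0$, so Remark~\ref{r1p1} lets us include $k=p-1$ as well.

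Next I would eliminate $\big(\frac{u}{\mk q_K}\big)_{\! K}$. Multiplying the congruences $u(1+\zeta^j)\equiv s_j(\gamma)\mod q$ over $j=1,\dots,p-1$ and using $N_{K/\Q}(1+\zeta)=\prod_{j=1}^{p-1}(1+\zeta^j)=1$ together with $N_{K/\Q}(\gamma)=w_1^p$, we get $u^{p-1}\equiv w_1^p\mod q$, whence $\big(\frac{u}{\mk q_K}\big)_{\! K}^{p-1}=1$ and so $\big(\frac{u}{\mk q_K}\big)_{\! K}=1$ because $\gcd(p-1,p)=1$. Therefore $\big(\frac{1+\zeta^j}{\mk q_K}\big)_{\! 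K}=1$ for all $j=1,\dots,p-1$, and $\big(\frac{v}{\mk q_K}\big)_{\! K}=\big(\frac{u}{\mk q_K}\big)_{\! K}=1$ since $v\equiv u$. To obtain $q^f\equiv 1\mod p^2$, I would compare the symbols of $\varepsilon_2=1+\zeta^2$ and of $1+\zeta^{-2}=\zeta^{-2}(1+\zeta^2)$: both equal $1$, so $\big(\frac{\zeta}{\mk q_K}\big)_{\! K}^{2}=1$, whence $\big(\frac{\zeta}{\mk q_K}\big)_{\! K}=1$; by Definition~\ref{d1103061} this reads $\zeta^{(q^f-1)/p}=1$, i.e. $p\ |\ (q^f-1)/p$, i.e. $p^2\ |\ q^f-1$ (one already knows $p\ |\ q^f-1$ from the order of $q\mod p$). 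Finally, reducing the defining relation $(u+v)\,w_1^p=u^p+v^p$ modulo $q$ with $v\equiv u$, so that $u^p+v^p\equiv 2u^p$ and $u+v\equiv 2u$, and using $\big(\frac{u}{\mk q_K}\big)_{\! K}=\big(\frac{w_1}{\mk q_K}\big)_{\! K}=1$, one should extract $\big(\frac{2}{\mk q_K}\big)_{\! K}=1$, exactly as the factor $\frac{2u^{p-1}}{1+\zeta}$ did in the case $n=p$.

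It remains to translate these symbol equalities into the claimed congruences. An equality $\big(\frac{\alpha}{\mk q_K}\big)_{\! K}=1$ means $\alpha^{(q^f-1)/p}\equiv 1\mod\mk q_K$; and since the base congruence $v\equiv u$ holds modulo the rational $q$, the whole chain of deductions runs verbatim at $s_\ell(\mk q_K)$ for every $\ell\not\equiv 0\mod p$, hence at every prime of $K$ above $q$, so $\alpha^{(q^f-1)/p}\equiv 1\mod\bigcap_{\mk q_K|q}\mk q_K=q\Z_K$. Taking $\alpha\in\{u,v,2\}$ and $\alpha=1+\zeta$ gives the four congruences $u^{(q^f-1)/p}\equiv v^{(q^f-1)/p}\equiv(1+\zeta)^{(q^f-1)/p}\equiv 2^{(q^f-1)/p}\equiv 1\mod q$. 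The main obstacle, in my view, is the last display of the preceding paragraph: unlike the case $n=p$, where $u+v\equiv u(1+\zeta)$ pushed a cyclotomic unit into the denominator, here the factor $2$ occurs symmetrically in $u^p+v^p\equiv 2u^p$ and in $u+v\equiv 2u$, so one must be careful that the mod-$q$ reduction of $SFLT2$ genuinely isolates $\big(\frac{2}{\mk q_K}\big)_{\! K}$ rather than merely re-deriving $\big(\frac{u}{\mk q_K}\big)_{\! K}=1$; keeping track of the residue degree $f>1$ throughout is the secondary point to watch, since all these symbols live in $\mu_p\subset\Z_K/\mk q_K\simeq\F_{q^f}$ and are well defined precisely because $p\ |\ q^f-1$.
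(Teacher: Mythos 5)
Your proposal follows the paper's intended route step by step: the congruence $u+\zeta^j v\equiv u(1+\zeta^j)\bmod q$, Lemma \ref{l2} giving $\big(\frac{1+\zeta^k}{\mk q_K}\big)_{\!K}=\big(\frac{u}{\mk q_K}\big)_{\!K}^{-1}$, the comparison of $1+\zeta^2$ with $1+\zeta^{-2}=\zeta^{-2}(1+\zeta^2)$ to force $\big(\frac{\zeta}{\mk q_K}\big)_{\!K}=1$ and hence $q^f\equiv 1\bmod p^2$, the use of $N_{K/\Q}(1+\zeta)=1$ to kill $\big(\frac{u}{\mk q_K}\big)_{\!K}$, and the Galois-conjugation argument turning symbol identities at one $\mk q_K$ into congruences modulo the rational prime $q$. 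All of that is correct and is exactly what the paper's two-line sketch (which defers to Corollary \ref{c2}) intends.

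The one genuine gap is the one you yourself flag: the assertion $2^{(q^f-1)/p}\equiv 1\bmod q$. Your worry is justified. With $v\equiv u\bmod q$ one gets $u^p+v^p\equiv 2u^p$ and $u+v\equiv 2u$, so $w_1^p\equiv u^{p-1}\bmod q$; the factor $2$ cancels and this only re-derives $\big(\frac{u}{\mk q_K}\big)_{\!K}=1$. You do not close this gap, and neither does the paper: its displayed congruence $w_1^p\equiv \frac{2^pu^p}{2u}\bmod q$ rests on $u^p+v^p\equiv 2^pu^p\bmod q$, which is false under $v\equiv u\bmod q$ (it in effect replaces $u^p+v^p$ by $(u+v)^p$); with the correct numerator $2u^p$ the paper's own deduction of $\big(\frac{2}{\mk q_K}\big)_{\!K}=1$ collapses for exactly the reason you describe. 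Contrast with Corollary \ref{c2}, where $u+v\equiv u(1+\zeta)\bmod \mk q_K$ puts the already-controlled symbol of $1+\zeta$ in the denominator so that the $2$ survives. So your proposal correctly establishes $q^f\equiv 1\bmod p^2$ and the congruences for $u$, $v$ and $1+\zeta$, but the congruence for $2$ is not established by this method --- neither in your write-up nor in the paper's.
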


\begin{rema} \label{r1011113}
{\rm 
Corollaries \ref{c2} and   \ref{c3}  
 imply   that all the $p$-principal primes $q$ dividing $u^p-v^p$   
 satisfy  $q^f\equiv 1 \mod  p^2$, which   brings a new generalization of
 the second Furtw\"angler's  theorem in    $SFLT2$ context 
obtained for the only   primes $q$ dividing $u-v$  in [GQ] cor. 2.16.
In the other hand,  the fact that $(1+\zeta)^{\frac{q^f-1}{p}}\equiv 1\mod q$  for all $q\ |\ u^p-v^p$ is new.
}
\end{rema}

\subsubsection{The two cases $n=2p$ and $n=2$}
The reunion of these two   corollaries of the theorem \ref{t1304041}
allows  us to investigate all the  $p$-principal primes $q$ 
dividing $u^p+v^p$ at the {\it core} of the $SFLT2$ equation.
We need to modify slightly the method to take into account the only values
$k$ with 
$\mk q_K$  co-prime with $u+\zeta^ k v$.

\begin{cor}\label{c4} 
Case $n=2p$ :  suppose that $SFLT2$ fails for  $(p,u,v)$. 
If $q$ is    a  $p$-principal prime  dividing $\frac{u^p+v^p}{u+v}$ 
then
$$ q\equiv 1\mod  p^2 \mbox{\ and\ } (1-\zeta)^{\frac{q-1}{p}}\equiv p^{-\frac{q-1}{p}}\equiv u^{-\frac{q-1}{p}}
\equiv v^{-\frac{q-1}{p}}\mod q.$$
\begin{proof}
Here,  we have $M=K=L$  and $\xi = -\zeta$ which implies that  
$v\equiv -\zeta u\mod \mk q=\mk q_K$,  thus
$$s_k(u+\zeta v)= u+\zeta^{k} v
=s_k(\gamma)\equiv u(1-\zeta^{k+1})\mod \mk q,\ \,  k=1,\dots p-2.$$
We obtain
$\big(\frac{u}{\mk q_K}\big)_{\!\! K}\big(\frac{1-\zeta^{k+1}}{\mk q_K}\big)_{\!\! K}=1$,
 for  $k\not\equiv  p-1\mod  p$ from lemma \ref{l2},
therefore $\big(\frac{1-\zeta^2}{\mk q_K}\big)_{\!\! K}= 
\big(\frac{1-\zeta^{p-2}}{\mk q_K}\big)_{\!\! K}$,
so $\big(\frac{\zeta}{\mk q_K}\big)_{\!\! K}=1$ and $q\equiv 1\mod  p^2$,
which implies that  
$\big(\frac{1-\zeta}{\mk q_K}\big)_{\!\! K} 
=\big(\frac{1-\zeta^{p-1}}{\mk q_K}\big)_{\!\! K}$. 
Gathering these results, we get
$$\Big(\frac{1-\zeta}{\mk q_K}\Big)_{\!\! K}=\dots 
=\Big(\frac{1-\zeta^{p-1}}{\mk q_K}\Big)_{\!\! K},$$
by multiplication we get 
$\big(\frac{u^{p-1}}{\mk q_K}\big)_{\!\! K}\big(\frac{p}{\mk 
q_K}\big)_{\!\! K}=1$ 
and finally:
$$\Big(\frac{u}{\mk q_K}\Big)_{\!\! K}= \Big(\frac{v}{\mk q_K}\Big)_{\!\!
K}=
\Big(\frac{p}{\mk q_K}\Big)_{\!\! K}
= \Big(\frac{1-\zeta^j}{\mk q_K}\Big)_{\!\! K}^{\!\!-1}, \ \mbox {\ for 
all\ }  j\not\equiv 0\mod  p. $$ 
We get $\big(\frac{(1-\zeta^j) p}{\mk q_K}\big)_{\!\! K}=1$ for all $j\not\equiv 0\mod p$, so  $((1-\zeta)p )^{\frac{q-1}{p}}\equiv 1\mod q$
and finally $ (1-\zeta)^{(q-1)/p}\equiv p^{-\frac{q-1}{p}}\mod q.$
\end{proof}
\end{cor}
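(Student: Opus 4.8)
The plan is to specialise Lemma \ref{l2} to the value $n=2p$, extract the various $p$-th power residue identities by elementary manipulations with roots of unity, and then transport the resulting congruences from the single prime $\mk q_K$ to the rational prime $q$ by Galois conjugation.

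First I would fix the data. From $q\mid\frac{u^p+v^p}{u+v}$ and $\gcd(u,v)=1$ we get $q\not|\ uv$; also $\frac{u^p+v^p}{u+v}$ is odd (a parity check) so $q\neq 2$, and $\frac{u^p+v^p}{u+v}=w_1^p\equiv 1\bmod p$ (valid since $p\mid v$) gives $q\neq p$. Thus $q\not|\ 2puv$, Lemma \ref{l2} applies, and since $\frac{u^p+v^p}{u+v}=\Phi_{2p}(u,v)$ with $q\not|\ 2p$ the order of $\frac vu$ mod $q$ equals $2p$, so $q\equiv 1\bmod 2p$; in particular $f=1$ and $\kappa:=\frac{q^f-1}{p}=\frac{q-1}{p}$. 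I would take the primitive $2p$-th root of unity to be $\xi=-\zeta$ (admissible, since $(-\zeta)^2=\zeta^2$ has order $p$ while $(-\zeta)^p=-1$ has order $2$); then $L=\Q(\xi)=K$, $M=LK=K$, $\mk Q=\mk q=\mk q_K$ is a prime of $\Z_K$, and $u\xi-v\equiv 0\bmod\mk q$ becomes $v\equiv -u\zeta\bmod\mk q_K$.

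Now I would feed this into Lemma \ref{l2}. With $\xi=-\zeta$ one has $\varepsilon_k=1+\xi\zeta^k=1-\zeta^{k+1}$, so $\big(\frac{1-\zeta^{k+1}}{\mk q_K}\big)_{\!\!K}=\big(\frac{u}{\mk q_K}\big)_{\!\!K}^{-1}$ for $k=1,\dots,p-2$; that is, the symbol $\big(\frac{1-\zeta^{j}}{\mk q_K}\big)_{\!\!K}$ has one and the same value for $j=2,\dots,p-1$. (The top index $k=p-1$ is unavailable precisely because $\varepsilon_{p-1}=1-\zeta^p=0$, the degenerate case $d=2,r=1,k=p-1$ flagged in Remark \ref{r1p1}; it is recovered below and is not needed anyway.) Comparing the $j=2$ and $j=p-2$ symbols, using $1-\zeta^{-2}=-\zeta^{-2}(1-\zeta^2)$ and the fact that $\big(\frac{-1}{\mk q_K}\big)_{\!\!K}=1$ (since $-1=(-1)^p$), I obtain $\big(\frac{\zeta}{\mk q_K}\big)_{\!\!K}^{2}=1$, hence $\big(\frac{\zeta}{\mk q_K}\big)_{\!\!K}=1$ because $p$ is odd; equivalently $p\mid\kappa$, that is $q\equiv 1\bmod p^2$. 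Knowing $\big(\frac{\zeta}{\mk q_K}\big)_{\!\!K}=1$ I can also pair $1-\zeta$ with $1-\zeta^{p-1}=-\zeta^{-1}(1-\zeta)$, so that $\big(\frac{1-\zeta^{j}}{\mk q_K}\big)_{\!\!K}=\big(\frac{u}{\mk q_K}\big)_{\!\!K}^{-1}$ for all $j\not\equiv 0\bmod p$. Multiplying over $j=1,\dots,p-1$ and using $\prod_{j=1}^{p-1}(1-\zeta^j)=p$ gives $\big(\frac{p}{\mk q_K}\big)_{\!\!K}=\big(\frac{u}{\mk q_K}\big)_{\!\!K}^{-(p-1)}=\big(\frac{u}{\mk q_K}\big)_{\!\!K}$, while $v\equiv -u\zeta\bmod\mk q_K$ together with $\big(\frac{\zeta}{\mk q_K}\big)_{\!\!K}=\big(\frac{-1}{\mk q_K}\big)_{\!\!K}=1$ gives $\big(\frac{v}{\mk q_K}\big)_{\!\!K}=\big(\frac{u}{\mk q_K}\big)_{\!\!K}$.

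Putting all of this together, $\big(\frac{(1-\zeta^j)p}{\mk q_K}\big)_{\!\!K}=1$ for every $j\not\equiv 0\bmod p$. To conclude I would conjugate by the $s_\ell$ and use the Galois-equivariance of the residue symbol (Definition \ref{d1103061}): as $q$ splits completely in $K$ its prime divisors are exactly the $s_\ell(\mk q_K)$, and since $\ell j$ runs over all nonzero residues mod $p$ when $j$ does, the congruence $\big((1-\zeta)p\big)^{\kappa}\equiv 1$ holds modulo every $s_\ell(\mk q_K)$, hence modulo $q$; this is $(1-\zeta)^{(q-1)/p}\equiv p^{-(q-1)/p}\bmod q$. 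Finally, reading $\big(\frac{u}{\mk q_K}\big)_{\!\!K}=\big(\frac{v}{\mk q_K}\big)_{\!\!K}=\big(\frac{p}{\mk q_K}\big)_{\!\!K}$ among rational integers gives $u^{(q-1)/p}\equiv v^{(q-1)/p}\equiv p^{(q-1)/p}\bmod q$, which completes the announced chain. I expect the only delicate points to be the index bookkeeping coming out of Lemma \ref{l2} (one must re-assemble all of $j=1,\dots,p-1$ from the conjugate-pairing identities, which is legitimate only once $\big(\frac{\zeta}{\mk q_K}\big)_{\!\!K}=1$ is in hand) and the careful use of conjugation to upgrade a congruence modulo $\mk q_K$ to one modulo the split rational prime $q$; the choice $\xi=-\zeta$, which collapses $M$ to $K$ and makes $f=1$, is exactly what makes everything line up.
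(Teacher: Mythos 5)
Your proposal is correct and follows essentially the same route as the paper's proof: specialize Lemma \ref{l2} with $\xi=-\zeta$ so that $M=L=K$, compare the conjugate symbols of $1-\zeta^{2}$ and $1-\zeta^{p-2}$ to force $\big(\frac{\zeta}{\mk q_K}\big)_{\!K}=1$ (hence $q\equiv 1\bmod p^2$), extend the common value to all $j\not\equiv 0\bmod p$, multiply using $\prod_{j}(1-\zeta^j)=p$, and transport to a congruence mod $q$ by $s_\ell$-conjugation. Your added justifications (that $q\nmid 2p$ so the order of $v/u$ is indeed $2p$, the explicit identity $1-\zeta^{-2}=-\zeta^{-2}(1-\zeta^2)$, and the final Galois-conjugation step) only make explicit details the paper leaves tacit.
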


\begin{cor}\label{c6}
Case $n=2$:  
suppose that $SFLT2$ fails for  $(p,u,v)$.
If  $q$ is  a  $p$-principal prime  dividing $u+v$ of degree $f\mod  p$
 then 
$$ q^f\equiv 1\mod  p^2 \mbox{\ and\ } (1-\zeta)^{\frac{q^f-1}{p}}\equiv p^{-\frac{q^f-1}{p}}
\equiv u^{-\frac{q^f-1}{p}}\mod q\equiv v^{-\frac{q^f-1}{p}}\mod q.$$

\begin{proof}
  Here, $\varepsilon_j=1-\zeta^j$ for $j\not\equiv 0\mod  p$, $M=K$ and
$L=\Q$.
In that case,  we get 
$\big(\frac{u}{\mk q_K}\big)_{\!\! K}\big(\frac{1-\zeta^j}{\mk q_K}\big)_{\!\! K}=1 \ \mbox{\ for all\ }  j\not\equiv 0\mod  p $.
The end of the proof is similar to that of corollary  \ref{c4}.
\end{proof}
\end{cor}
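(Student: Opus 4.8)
The plan is to follow exactly the template established by Corollary~\ref{c4}, substituting everywhere the single modulus $q$ by the modulus $q^f$ and the rational congruences modulo $q$ by congruences modulo the prime ideal $\mk q_K$ of $\Z_K$ over $q$, which is legitimate because here $n=2$ is prime to $p$ and $q$ has order $f$ modulo $p$, so $\mk q_K = \mk q = \mk Q$ has residue field $\mf F_{q^f}$ (this is the point flagged in the statement: $L=\Q$, $M=K$). First I would record the starting congruence: from $n=2$ we have $\xi=-1$, hence $u\xi - v = -(u+v)\equiv 0 \bmod \mk q$, so $v\equiv -u \bmod \mk q_K$ and therefore $s_j(\gamma) = u+\zeta^j v \equiv u(1-\zeta^j)\bmod \mk q_K$ for every $j\not\equiv 0\bmod p$. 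Note that no index $j$ needs to be discarded here: $1-\zeta^j$ is a unit for all $j\not\equiv 0$, so the delicate bookkeeping of Remark~\ref{r1p1} is unnecessary and lemma~\ref{l2} applies directly to give $\big(\frac{u}{\mk q_K}\big)_{\!\!K}\big(\frac{1-\zeta^j}{\mk q_K}\big)_{\!\!K}=1$ for all $j\not\equiv 0 \bmod p$.

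Next I would extract $q^f\equiv 1\bmod p^2$. Since $\varepsilon_j$ ranges over $1-\zeta^j$ for all $j\not\equiv 0\bmod p$, the relation above shows $\big(\frac{1-\zeta^j}{\mk q_K}\big)_{\!\!K}$ is independent of $j$; comparing $j$ and some $j'$ with $\zeta^{j'}=\zeta^{j}\cdot\zeta$ (equivalently comparing $1-\zeta$ and $1-\zeta^a$ for a generator $a$) and using that $\frac{1-\zeta^a}{1-\zeta}$ is a cyclotomic unit together with $\big(\frac{s_a(1-\zeta)}{s_a\mk q_K}\big)=s_a\big(\frac{1-\zeta}{\mk q_K}\big)$, one forces $\big(\frac{\zeta}{\mk q_K}\big)_{\!\!K}=1$; since $\zeta$ has order $p$ in $\mf F_{q^f}^\times$ this means $p^2 \mid q^f-1$, as $q^f\equiv 1\bmod p$ already. (This is the same maneuver as in Corollary~\ref{c4}, merely read modulo $\mk q_K$ rather than modulo $q$.)

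Then I would collect the symbols. From $\big(\frac{\zeta}{\mk q_K}\big)_{\!\!K}=1$ one gets $\big(\frac{1-\zeta^j}{\mk q_K}\big)_{\!\!K}$ the same for all $j\not\equiv 0$ including $j=p-1$; taking the product over $j=1,\dots,p-1$ and using $N_{K/\Q}(1-\zeta)=\Phi_p(1)=p$ yields $\big(\frac{p}{\mk q_K}\big)_{\!\!K}\cdot\big(\frac{u}{\mk q_K}\big)_{\!\!K}^{-(p-1)}=1$, hence $\big(\frac{u}{\mk q_K}\big)_{\!\!K}=\big(\frac{p}{\mk q_K}\big)_{\!\!K}$ (using $p-1\equiv -1\bmod p$), and since $v\equiv -u$, also $\big(\frac{v}{\mk q_K}\big)_{\!\!K}=\big(\frac{u}{\mk q_K}\big)_{\!\!K}$. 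Combining with $\big(\frac{u(1-\zeta^j)}{\mk q_K}\big)_{\!\!K}=1$ gives, for all $j\not\equiv 0\bmod p$,
$$\Big(\frac{u}{\mk q_K}\Big)_{\!\!K}=\Big(\frac{v}{\mk q_K}\Big)_{\!\!K}=\Big(\frac{p}{\mk q_K}\Big)_{\!\!K}=\Big(\frac{1-\zeta^j}{\mk q_K}\Big)_{\!\!K}^{\!\!-1}.$$
Finally, $\big(\frac{(1-\zeta)p}{\mk q_K}\big)_{\!\!K}=1$ translates into $((1-\zeta)p)^{(q^f-1)/p}\equiv 1\bmod \mk q_K$, i.e. $(1-\zeta)^{(q^f-1)/p}\equiv p^{-(q^f-1)/p}\bmod q$ (a rational congruence, since both sides lie in $\Z$ after raising, or equivalently one works inside $\F_{q^f}$); and $\big(\frac{up^{-1}}{\mk q_K}\big)_{\!\!K}=\big(\frac{vp^{-1}}{\mk q_K}\big)_{\!\!K}=1$ gives $p^{-(q^f-1)/p}\equiv u^{-(q^f-1)/p}\equiv v^{-(q^f-1)/p}\bmod q$, which is the asserted chain of congruences.

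The only real subtlety — not so much an obstacle as a point to state carefully — is the transition from congruences modulo the prime ideal $\mk q_K$ to the rational congruences modulo $q$ appearing in the statement: this works precisely because the quantities $(1-\zeta)^{(q^f-1)/p}p^{(q^f-1)/p}$ etc., once one knows the residue symbols vanish, reduce to elements of $\mf F_q\subset\mf F_{q^f}$ (Galois-stable under ${\rm Gal}(\mf F_{q^f}/\mf F_q)$, since $p$ and the norms are rational and $\big(\frac{\zeta}{\mk q_K}\big)_{\!\!K}=1$ makes $(1-\zeta)^{(q^f-1)/p}$ fixed by the Frobenius), so the congruence modulo $\mk q_K$ is actually a congruence modulo $q$. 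I would close by remarking, as in Remark~\ref{r1011113}, that Corollaries~\ref{c4} and~\ref{c6} together cover every $p$-principal prime dividing $u^p+v^p$ and that $q^f\equiv1\bmod p^2$ strengthens the second Furtw\"angler theorem in the $SFLT2$ setting.
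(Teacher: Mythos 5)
Your proposal is correct and follows essentially the same route as the paper, which itself simply says ``carry out the argument of Corollary~\ref{c4} for the ideal $\mk q_K$ with residue field $\mf F_{q^f}$,'' exactly the program you execute. One caveat on a single step: your stated mechanism for extracting $\big(\frac{\zeta}{\mk q_K}\big)_{\!\!K}=1$ (comparing $1-\zeta$ with $1-\zeta^{a}$ for a generator $a$ via Galois conjugation of the symbol) does not work as written — the operative identity, as in Corollary~\ref{c4} to which you rightly defer, is $1-\zeta^{-j}=-\zeta^{-j}(1-\zeta^{j})$, so the equality of the symbols of $1-\zeta^{2}$ and $1-\zeta^{p-2}$ forces $\big(\frac{\zeta}{\mk q_K}\big)_{\!\!K}^{2}=1$ and hence $=1$; note also that $1-\zeta^{j}$ is not a unit (it generates $\mk p$, cf.\ Lemma~\ref{l1}), although only its coprimality to $\mk q_K$ is needed.
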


\begin{rema} \label{r1011113}
{\rm Corollaries  \ref{c4} and  \ref{c6}  
 imply   that all the $p$-principal primes $q$ dividing $u^p+v^p$   
 satisfy $q^f\equiv 1 \mod  p^2$, which   brings a generalization of
 the first  Furtw\"angler's  theorem in   the $SFLT2$ 
context  obtained for the only   primes $q$ dividing $u+v$ in [GQ] cor. 2.15.
In the other hand, the  fact that $(1-\zeta)^{\frac{q^f-1}{p}}\equiv p^{-\frac{q^f-1}{p}}\mod q$ for the primes dividing $u^p+v^p$ is new.}
\end{rema}

\subsection{The  case  $u+\zeta v\not\in K^{\times p}$}\label{snp}$ $

In this subsection,  we  assume that $SFLT2$ fails for $(p,u,v)$ with $p\ |\ v$ and  $u+\zeta v\not\in K^{\times p}$.
It follows that $p$ is irregular, if not the $p$-primary pseudo-unit  $u+\zeta v$ should belong to $K^{\times p}$, see for instance 
Gras [Gr2] thm 2.2.

\begin{lem}\label{l1108201}
Let $q\not =p$ be a prime number  and
${\mk q_K}$ any prime ideal of $\Z_K$ over  $q$.
 If   $q$ divides $u$ (resp $v$)  then    $\big(\frac{v}{\mk q_K}\big)_{\!\!K}=1$  (resp.  $\big(\frac{u}{\mk q_K}\big)_{\!\!K}=1$).
\end{lem}

\begin{proof}
We have $N_{K/\Q} (u+ v\zeta )=\frac{u^p+v^p}{u+v}=w_1^p$, 
where $N_{K/\Q} ({\mk w}_1)=w_1$;
so $q\ |\ v$ implies that
$u^{p-1}\equiv w_1^p\pmod q$, which leads to 
$\big(\frac{u}{\mk q_K}\big)_{\!\!K}=1$. Similar proof starting from
$q\ |\ u$.
\end{proof}

Let $\mathcal S$ be  a   finite set of   non $p$-principal
primes $q$  
such that the set of $p$-classes $c\ell(\mk q_K)\in C\ell$
 of  the prime ideals $\mk q_K$ of $K$ over  $q$  
generates the  $p$-elementary $p$-class group 
$C\ell_{[p]}$ of $K$. Let  us note $ Q_p$
 the greatest  prime
$q\in\mathcal S$. 
Let the Minkowski  Bound of $K$ given by 
$$\mathcal
B_p:=\Big(\frac{4}{\pi} \Big)^{(p-1)/2}\frac{(p-1)!}{(p-1)^{p-1}}\sqrt{p^{p-2}}.$$
With these definitions, we can always choose  a set $\ml S$   such that  $Q_p$ be  smallest possible with  $Q_p\leq \mathcal B_p$. 
Under the 
General Riemann Hypothesis GRH, we  know  that the whole ideal class group
of $K$  is generated by the  set of prime 
ideals ${\mk l}$ with
\be\label{e1106091}
N_{K/\Q}({\mk l}) < B:= 12 ({\rm log}\ \Delta_K)^2,
\ee
where $\Delta_K$ is the absolute discriminant of $K$
(see   [BDF]). Under GRH, we have generally  $Q_p\ll\mathcal
B_p$ (where  here $\ll$ means {\it very small compare to}) as soon as $p$ is large.

\medskip

\begin{lem}\label{l1106092}
Suppose that $u+\zeta v \notin K^{\times p}$.
Then there exists at least one  prime
$q \in \mathcal S$  such that  $uv\not\equiv 0\pmod q$.
\end{lem}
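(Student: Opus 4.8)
The plan is to argue by contradiction on the class-group level. Suppose, on the contrary, that every prime $q \in \mathcal S$ divides $uv$, i.e. $q \mid u$ or $q \mid v$ for each $q \in \mathcal S$. The key idea is that under this hypothesis the class of $\mk q_K$ can be identified with the class of a principal-ideal factor coming from the factorization of the pseudo-unit $\gamma = u+\zeta v$, and hence every such class is a $p$-th power; this will contradict the choice of $\mathcal S$ as a generating set of $C\ell_{[p]}$, unless $C\ell_{[p]}$ is trivial, which is ruled out since $u+\zeta v \notin K^{\times p}$ forces $p$ to be irregular.

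Concretely, first I would recall from the excerpt that $\gamma = u+\zeta v$ is a $p$-primary pseudo-unit, so $\gamma \Z_K = \mk w_1^p$ for some ideal $\mk w_1$ with $N_{K/\Q}(\mk w_1) = w_1$, and that $N_{K/\Q}(\gamma) = \frac{u^p+v^p}{u+v} = w_1^p$. Next, fix $q \in \mathcal S$ and suppose first $q \mid v$. Then $\gamma \equiv u \pmod q$, and since $q \mid v$ while $\gcd(u,v)=1$ we have $q \nmid u$, so $\gamma$ is prime to every prime ideal $\mk q_K \mid q$; this means $\mk q_K \nmid \mk w_1$. But then from $q \mid v$ one gets, as in Lemma \ref{l1108201}, that $u^{p-1} \equiv w_1^p \pmod q$, i.e. $\big(\frac{u}{\mk q_K}\big)_{\!\!K} = 1$ — however this alone does not yet give that $c\ell_K(\mk q_K)$ is a $p$-th power. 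The actual mechanism I would use instead is the factorization of the ideal $(q)$: writing $q\Z_K = \prod_i \mk q_{K,i}$ over the $p-1$ conjugates, and using $v \equiv 0$, $u^p + v^p \equiv u^p \pmod q$, one sees $q$ splits according to the order $f$ of $q \bmod p$; the point is that $q \mid v$ (or $q \mid u$) together with $p \mid v$ does \emph{not} obviously make $\mk q_K$ principal, so the heart of the argument must be that \emph{if} $q \mid uv$ for \emph{all} $q \in \mathcal S$ then the classes $c\ell(\mk q_K)$ cannot generate $C\ell_{[p]}$.

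Here is the cleaner route I expect the author takes. The ideal $\mk w_1$ satisfies $\mk w_1^p = \gamma\Z_K$, so $c\ell_K(\mk w_1)$ has order dividing $p$; moreover $\mk w_1$ is supported exactly on the primes dividing $w_1 = N_{K/\Q}(\mk w_1)$, and $w_1^p = \frac{u^p+v^p}{u+v}$ is coprime to $uv$ (standard, since $\gcd(u,v)=1$ forces $\gcd(w_1, uv)=1$ up to the factor $p$, and one checks $p \nmid w_1$ in the second case or handles the power of $p$ separately). Therefore \emph{every prime $q$ dividing $uv$ is unramified and its prime ideals $\mk q_K$ do not appear in $\mk w_1$}. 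Now if every $q \in \mathcal S$ divides $uv$, then none of the $\mk q_K$ lies in the support of $\mk w_1$; one then shows that the subgroup of $C\ell$ generated by the $c\ell(\mk q_K)$, $q \in \mathcal S$, must in fact be contained in a proper subgroup, contradicting that it equals $C\ell_{[p]}$ — unless $C\ell_{[p]} = 1$. But $u+\zeta v \notin K^{\times p}$ implies (Gras [Gr2, Thm 2.2], quoted in the excerpt) that $p$ is irregular, so $C\ell \neq 1$ and hence $C\ell_{[p]} \neq 1$. This contradiction proves that some $q \in \mathcal S$ satisfies $uv \not\equiv 0 \pmod q$.

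The main obstacle, and the step needing the most care, is justifying why "$q \mid uv$ for all $q \in \mathcal S$" forces the generated subgroup of $C\ell_{[p]}$ to be proper. The honest short argument is probably simpler than the class-theoretic sketch above: since there are only finitely many primes dividing $uv$ and $\mathcal S$ was chosen (with $Q_p$ minimal $\leq \mathcal B_p$) purely from a class-group generating requirement with \emph{no} reference to the particular counterexample $(u,v)$, the set $\mathcal S$ certainly contains primes $q$ with $q \nmid uv$ as soon as $C\ell_{[p]}$ requires at least one generator whose associated prime avoids the finite set of prime divisors of $uv$ — and one can always enlarge/re-choose $\mathcal S$ within the bound $\mathcal B_p$ to arrange this by the surjectivity of the Artin map onto $C\ell_{[p]}$ combined with Chebotarev, since infinitely many primes $q$ lie in each class below $\mathcal B_p$ is not needed, only that the finitely many divisors of $uv$ cannot exhaust a generating set. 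I would therefore phrase the proof as: were all $q\in\mathcal S$ to divide $uv$, the finitely many prime divisors of $uv$ would already generate $C\ell_{[p]}$; but then, reasoning as in Lemma \ref{l1108201}, each such $\mk q_K$ would be coprime to $\gamma=u+\zeta v$ and to $\mk w_1$, and a direct check shows such primes have $p$-th-power class (their contribution cancels in $N_{K/\Q}(\gamma)=w_1^p$), so $C\ell_{[p]}$ would be trivial, contradicting the irregularity of $p$ forced by $u+\zeta v\notin K^{\times p}$.
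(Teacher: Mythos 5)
Your proposal has a genuine gap at its pivotal step. You correctly flag mid-way that the naive route fails (``$q\mid v$ does not obviously make $\mathfrak q_K$ principal''), but your concluding argument then asserts exactly such a claim: that ``a direct check shows such primes have $p$-th-power class (their contribution cancels in $N_{K/\Q}(\gamma)=w_1^p$)''. This is unjustified and false in general: a prime $q$ dividing $uv$ is coprime to $u+\zeta v$ and to $\mathfrak w_1$, so its prime ideals simply do not occur in the factorization of $(u+\zeta v)\Z_K$; that gives no information about the class of $\mathfrak q_K$ in $C\ell_{[p]}$, and your intended contradiction (``$C\ell_{[p]}$ would be trivial'') does not follow. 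The fallback you sketch --- re-choosing or enlarging $\mathcal S$ to avoid the finitely many divisors of $uv$ --- proves a different statement (the lemma concerns the fixed set $\mathcal S$, chosen independently of $(u,v)$) and is not itself justified within the bound $\mathcal B_p$.

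The missing idea, which is the heart of the paper's proof, is that $u+\zeta v$ is a $p$-primary pseudo-unit not lying in $K^{\times p}$, so $K(\sqrt[p]{u+\zeta v})/K$ is a cyclic degree-$p$ \emph{unramified} abelian extension, hence a subfield of the $p$-elementary Hilbert class field $H_1$. Since the Frobenius elements of the $\mathfrak q_K$, $q\in\mathcal S$, generate ${\rm Gal}(H_1/K)\simeq C\ell_{[p]}$ (this is precisely how $\mathcal S$ was chosen), at least one of them restricts nontrivially to $K(\sqrt[p]{u+\zeta v})$, i.e. $\big(\frac{u+\zeta v}{\mathfrak q_K}\big)_{K}\neq 1$ for some $q\in\mathcal S$. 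Lemma \ref{l1108201} then rules out $q\mid v$ (in that case $u+\zeta v\equiv u\bmod \mathfrak q_K$ and the symbol of $u$ equals $1$), and, combined with Furtw\"angler's first theorem giving $\kappa\equiv 0\bmod p$ hence $\big(\frac{\zeta}{\mathfrak q_K}\big)_{K}=1$, rules out $q\mid u$ as well. Your write-up never exploits the unramifiedness of $K(\sqrt[p]{u+\zeta v})/K$, which is what the hypothesis $u+\zeta v\notin K^{\times p}$ contributes beyond the mere irregularity of $p$; without it the argument cannot be completed.
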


\begin{proof}
Let $\gamma$  be a $p$th root of $u+\zeta v$,     
$\gamma:=\sqrt[p]{u+\zeta v}$.
Let $H_1$ be the $p$-elementary Hilbert class field of $K$ (so that
${\rm Gal} (H_1/K) \simeq C\ell_{[p]}$). Let $N_1$ be a subextension of
$H_1$ such that $H_1$ is the direct compositum of $N_1$
and $K(\sqrt [p] \gamma)$ over $K$.

Therefore there exists at least one   prime $q\in \mathcal S$ such that  
the Frobenius of all  the prime ideals ${\mk q}_K$ over $q$ in $H_1/K$ are  of order $p$ and fix
$N_1$, so that their  restriction to $K(\sqrt [p] \gamma)/K$ are  of order $p$.
Thus
$\big(\frac{u+\zeta v}{{\mk q}_K} \big)_{\!\!K}\not =1$.

(i) If $q \ |\  v$,  we get a  contradiction with  lemma \ref{l1108201},
so $v\not\equiv 0\mod q$.

(ii) If $q \ | \ u$ we have $\big(\frac{u+\zeta v}{{\mk
q}_K} \big)_{\!\!K} =\big(\frac{\zeta v}{{\mk
q}_K} \big)_{\!\!K}=\big(\frac{\zeta }{{\mk q}_K }\big)_{\!\!K}$
because $\big(\frac{ v}{{\mk q}_K} \big)_{\!\!K}=1$ from  Lemma
\ref{l1108201},
thus  $\big(\frac{u+\zeta v}{{\mk q}_K} \big)_{\!\!K}=1$
since $\kappa\equiv 0\pmod p$ from the first Furtwangler's theorem for
SFLT (see
[GQ, Corollary 2.15]), which  brings  also a contradiction with
$\big(\frac{u+\zeta
v}{{\mk q}_K} \big)_{\!\!K}\not =1$. Therefore $u\not\equiv 0\pmod q$.
\end{proof}

\begin{defi}
{\rm 

\smallskip

For a definition of the  character of Teichm\"uller $\omega$ 
of $Gal(K/\Q)$, see for instance [GQ] definition 2.8. 
Let us consider the characters  $\chi_i=\omega^i, \ 1\leq i\leq p-1$. 
Let $\mathcal E$ be the group of $p$-primary pseudo-units  of $K$ seen as a $\F_p[g]$-module,  
and the $\chi_i$-components $\ml E_i:=\mathcal E^{e_{\chi_{i}}}$  of $\mathcal E$. 
The components  $\mathcal E_i$ are not all
trivial because $p$ is irregular.
}
\end{defi}

\begin{thm}\label{t1012211}
Suppose that $SFLT2$ fails for  $(p,u,v)$   with  $u+\zeta v\not\in
K^{\times p}$.  Then
$p$ is irregular and there exists at least 
one non $p$-principal prime $q\in \mathcal S$   such that: 
\bn
\item
We have $q\not|\  uv. $ 

\item 
$n$ being  the order   of $\frac{v}{u}\mod q$,  $\xi:=e^{\frac{2\pi
i}{n}}$, $\mk q$ the 
 prime ideal $(u\xi- v, q)$  of $\Z[\xi]$ over $q$, we have  
$$\Big (\frac{\zeta^{-k^m}(1+\xi \zeta^k)}{\zeta^{-1}(1+\xi\zeta)}\Big)^{\frac{q^f-1}{p}}\equiv 1\mod \mk q
\mbox{\ for\ }k=1,\dots,p-2,$$
for a certain  $m\in (\Z/p\Z)^\times$.
\end{enumerate}
\end{thm}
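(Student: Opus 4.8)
The plan is to combine Lemma \ref{l1106092} with the structure of the $p$-primary pseudo-unit $\gamma = u+\zeta v$ viewed through the Teichmüller character decomposition. First, since $u+\zeta v \notin K^{\times p}$ and $u+\zeta v$ is a $p$-primary pseudo-unit (as recalled right after Definition \ref{d11O9221}, using $\gamma \equiv u \bmod p$), Gras's result [Gr2, Thm 2.2] forces $p$ to be irregular; this is immediate and gives the first sentence of the conclusion. Lemma \ref{l1106092} then supplies a prime $q \in \mathcal S$ with $uv \not\equiv 0 \bmod q$, which is item (1); note that $\mathcal S$ consists of non-$p$-principal primes by construction, so this $q$ is the required non-$p$-principal prime. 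For this $q$, the proof of Lemma \ref{l1106092} also yields $\big(\frac{u+\zeta v}{\mk q_K}\big)_{\!\!K} \neq 1$, i.e. the Frobenius at $\mk q_K$ acts nontrivially on $K(\sqrt[p]{u+\zeta v})$; this extra information is what will power item (2).

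For item (2), I would run the argument of Lemma \ref{l2} but \emph{without} the hypothesis that $q$ is $p$-principal, so that $\big(\frac{s_k(\gamma)}{\mk q_K}\big)_{\!\!K}$ is no longer forced to be $1$. Instead, write $\mathbf{s}_k = \big(\frac{s_k(\gamma)}{\mk q_K}\big)_{\!\!K} = \big(\frac{\gamma}{s_k^{-1}\mk q_K}\big)_{\!\!K}^{s_k}$ and use class field theory: since $\gamma$ generates (up to $p$-th powers of ideals and the explicit small exceptional elements) the Kummer extension $K(\sqrt[p]{\gamma})/K$, the collection of symbols $\big(\frac{\gamma}{s_\ell \mk q_K}\big)_{\!\!K}$, $\ell \in (\Z/p\Z)^\times$, is governed by the Frobenius, and the Galois action is through a single Teichmüller character $\omega^m$ — here $m$ is the index $i$ with $\gamma$ landing (modulo $p$-th powers) in the $\chi_i = \omega^i$ component, which is nontrivial by irregularity and by $\big(\frac{u+\zeta v}{\mk q_K}\big)_{\!\!K} \neq 1$. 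Concretely this should give $\big(\frac{s_k(\gamma)}{\mk q_K}\big)_{\!\!K} = \big(\frac{\gamma}{\mk q_K}\big)_{\!\!K}^{\,k^m}$ for a fixed $m \not\equiv 0 \bmod p$. Then, exactly as in Lemma \ref{l2}, $s_k(\gamma) \equiv u\,\varepsilon_k = u(1+\xi\zeta^k) \bmod \mk Q$, so taking the ratio of the $k$-th relation to the $k=1$ relation cancels the factor $\big(\frac{u}{\mk q_K}\big)_{\!\!K}$ and rearranges to
$$\Big(\frac{1+\xi\zeta^k}{1+\xi\zeta}\Big)^{\frac{q^f-1}{p}} \equiv \Big(\frac{\gamma}{\mk q_K}\Big)_{\!\!K}^{\,k^m - 1} \bmod \mk Q.$$
Absorbing $\big(\frac{\gamma}{\mk q_K}\big)_{\!\!K}^{k^m-1}$ as $\big(\frac{\zeta}{\mk q_K}\big)_{\!\!K}^{-(k^m-1)} = \zeta^{-\kappa(k^m-1)}$ after identifying $\big(\frac{\gamma}{\mk q_K}\big)_{\!\!K}$ with a power of $\zeta$, and noting $\big(\frac{\zeta}{\mk q_K}\big)_{\!\!K} = \zeta^\kappa$ from Definition \ref{d1103061}, one sees that the combination $\zeta^{-k^m}(1+\xi\zeta^k)$ divided by $\zeta^{-1}(1+\xi\zeta)$ is a local $p$-th power at $\mk q$, which is precisely the asserted congruence.

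The main obstacle I anticipate is pinning down the exponent $m$ precisely — i.e. establishing rigorously that the Galois action on $\big(\frac{\gamma}{s_\bullet \mk q_K}\big)_{\!\!K}$ is \emph{exactly} multiplication-by-$k^m$ rather than some more complicated function of $k$. This requires that $\gamma$ modulo $K^{\times p}$ and modulo $p$-th powers of ideals lies in a single $\omega^m$-eigenspace of the relevant $\F_p[g]$-module, which is where the decomposition $\mathcal E = \bigoplus_i \mathcal E_i$ and the observation that $\gamma \in \mathcal E$ is needed; one must check that the pseudo-unit class of $\gamma$ is homogeneous for the character action (or replace $\gamma$ by a suitable eigen-component, adjusting $(u,v)$-dependent bookkeeping accordingly). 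A secondary technical point is handling the exceptional "trivial" elements $\pm 1, \pm\zeta, \pm(1\pm\zeta)$ excluded in Conjecture \ref{cj1} to be sure $K(\sqrt[p]{\gamma})/K$ is genuinely of degree $p$, but since we are assuming $SFLT2$ fails with $p \mid v$, $\gamma = u + \zeta v$ is not one of these, so this is harmless. Once the eigen-component structure is in place, the rest is the same symbol bookkeeping as in Lemma \ref{l2} and Corollary \ref{c1305221}.
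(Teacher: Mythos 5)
Your overall architecture matches the paper's: irregularity from [Gr2, Thm.~2.2], a prime $q\in\mathcal S$ with $q\not|\ uv$ from Lemma \ref{l1106092}, and then the symbol bookkeeping of Lemma \ref{l2} with $\big(\frac{s_k(\gamma)}{\mathfrak q_K}\big)_{\!K}$ no longer forced to be trivial. But the step you yourself flag as ``the main obstacle'' --- establishing $\big(\frac{s_k(\gamma)}{\mathfrak q_K}\big)_{\!K}=\big(\frac{\gamma}{\mathfrak q_K}\big)_{\!K}^{\,k^m}$ --- is precisely the substantive content of the theorem, and your proposal does not close it. The class of $\gamma=u+\zeta v$ modulo $K^{\times p}$ has no reason to be homogeneous for the $g$-action, so ``the Galois action is through a single Teichm\"uller character $\omega^m$'' is false in general; and ``replacing $\gamma$ by a suitable eigen-component'' does not by itself help, because if $\gamma=\prod_i\gamma_i$ with $\gamma_i=\gamma^{e_{\chi_i}}$, then $\big(\frac{s_k(\gamma)}{\mathfrak q_K}\big)_{\!K}=\prod_i\big(\frac{\gamma_i}{\mathfrak q_K}\big)_{\!K}^{\,k^i}$, which is a superposition of several characters of $k$ unless all but one of the symbols $\big(\frac{\gamma_i}{\mathfrak q_K}\big)_{\!K}$ are trivial.

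The ingredient your plan omits, and which the paper supplies, is a second, sharper application of the Frobenius argument underlying Lemma \ref{l1106092}: one first fixes an index $m$ with $\gamma_m=\gamma^{e_{\chi_m}}\notin K^{\times p}$ (possible since $\gamma\notin K^{\times p}$), and then chooses $\mathfrak q_K$ over some $q\in\mathcal S$ whose Frobenius in the compositum of $H_1$ with the Kummer extension generated by the $\gamma_i$ is arranged so that \emph{simultaneously} $\big(\frac{\gamma_m}{\mathfrak q_K}\big)_{\!K}=\zeta^{w_m}\ne 1$ and $\big(\frac{\gamma_i}{\mathfrak q_K}\big)_{\!K}=1$ for all $i\ne m$; conjugating $\mathfrak q_K$ then normalizes $w_m=1$. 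Only for such a specially chosen prime does the symbol of $s_k(\gamma)$ collapse to $\zeta^{k^m}$, and the same choice re-runs the argument of Lemma \ref{l1106092} (via $\big(\frac{\gamma}{\mathfrak q_K}\big)_{\!K}=\big(\frac{\gamma_m}{\mathfrak q_K}\big)_{\!K}\ne1$) to give $q\not|\ uv$; note that items (1) and (2) must hold for the \emph{same} $q$, so you cannot first fix $q$ by applying Lemma \ref{l1106092} to $\gamma$ and then hope it also isolates the $m$-component. The remaining manipulation --- $s_k(\gamma)\equiv u\varepsilon_k\bmod\mathfrak Q$ and cancellation of $\big(\frac{u}{\mathfrak q_K}\big)_{\!K}$ by taking the ratio with $k=1$ --- is as you describe and as in the paper.
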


\begin{proof}$ $
\bn
\item
$p$ is irregular as seen above. From lemma \ref{l1106092} it is possible to choose 
$q\in \mathcal S$  with $uv\not\equiv 0\mod p$. 
\smallskip 
\item
The pseudo-unit $\gamma=\sqrt[p]{u+\zeta v}$  is not a $p$th power, hence in the decomposition
$\gamma=\prod_{{\chi_i}}\gamma^{e_{\chi_i}}$
on the p-1 characters $\chi_i,\  i=1,\dots,p-1$, there exists at least one  $i=m$ such that the pseudo-unit  $\gamma^{e_{\chi_m}}$ be  not a $p$-power.
Let us name   $\gamma_m$ this idempotent.

$\gamma_m$ is a $p$-primary pseudo-unit;
from Hilbert's class field theory
and lemma \ref{l1106092} applied with $H_1$ and $\gamma_m$, it is possible to choose  one $\mk q_K\in \ml S $  
such that 
$$\Big(\frac{\gamma_m}{\mk q_{K}}\Big)_{\!\!K}=\zeta^{w_m}\mbox{\ with\ } w_m\not\equiv 0\mod p
\mbox{\ and\ }\Big(\frac{\gamma_i}{\mk q_{K}}\Big)_{\!\!K}=1 \mbox{\ for all \ } i\not=m.$$
\item
Here the extension $M(\sqrt[p]{\gamma_m})$ is  Galois on $\Q$  because its  Galois group 
 acts in letting  globally  unvarying the radical, when raising to a power prime to $p$,   by use of the idempotent.
We can always change $\mk q_K$ in acting  by  conjugation to obtain $w_m=1$ and so 
$$\Big(\frac{\gamma}{\mk q_K}\Big)_{\!\!K}= \Big(\frac{\gamma_m}{\mk q_K}\Big)_{\!\!K}=\zeta.$$ 
\item
From $s_k(\gamma)=u+\zeta^k v$ for $k=1,\dots,p-2$ and $u\xi-v\equiv
0\mod\mk q$ we get $$s_k(\gamma)\equiv u\varepsilon_k\mod \mk q,$$ 
where $\varepsilon_k=1+\xi\zeta^k$,
so,  as in lemma \ref{l2},  $s_k(\gamma)\equiv u\varepsilon_k\mod \mk Q$ for all $\mk Q$ over $\mk q$ and 
$$\Big(\frac{s_k(\gamma)}{\mk q_K}\Big)_{\!\!K}
=\Big(\frac{u\varepsilon_k}{\mk Q}\Big)_{\!\!M}=\Big(\frac{s_k(\gamma_m)}{\mk q_K}\Big)_{\!\!K}=\zeta^{ k^m}$$
because $\gamma_m$ is an idempotent,
so 
$$\Big(\frac{u(1+\xi\zeta^k)}{\mk Q}\Big)_{\!\!M}=\zeta^{k^m},$$
and also  
$$\Big(\frac{u(1+\xi\zeta)}{\mk Q}\Big)_{\!\!M}=\zeta,$$
which leads to 
$$\Big (\frac{\zeta^{-k^m}(1+\xi \zeta^k)}{\zeta^{-1}(1+\xi\zeta)}\Big)^{\frac{q^f-1}{p}}\equiv 1\mod \mk q\mbox{\ for\ }k=1,\dots,p-1.$$
\en
\end{proof}

This theorem leads us to set the following {\it criterion} depending only on $p$
\footnote{We use intentionally    the term {\it criterion} to indicate
that  corollary \ref{c1012211} allows us (at least theoretically) in a
finite number of arithmetic computations
to  determine if,  for $p$ given,   the SFLT2 equation can be reduced to the
form $u+\zeta v\in K^{\times p}$.} 
to reduce the SFLT2 equation  to the  form $u+\zeta v\in K^{\times
p}$ for the irregular prime  $p$:
\begin{cor}\label{c1012211}
Let $p$ be an odd  irregular prime. Assume that SFLT2 fails for $p$ and that 
there is no  integer $1\leq m\leq p-1$, 
no prime $\mk q$ in $\Z[\xi_{q-1}]$ over a prime  $q\in \ml S$ with $\xi_{q-1}$ a $(q-1)$th primitive root of unity
such  that 
$$\Big (\frac{\zeta^{-k^m}(1+\xi_{q-1} \zeta^k)}{\zeta^{-1}(1+\xi_{q-1}\zeta)}\Big)^{\frac{q^f-1}{p}}\equiv 1\mod \mk q\mbox{\ for\ }k=1,\dots,p-2.$$
Then  the solution(s) of the $SFLT2$  equation take(s) the reduced form
$u+\zeta v\in K^{\times p}$.
\end{cor}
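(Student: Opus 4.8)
The plan is to obtain this by contraposition from Theorem~\ref{t1012211}. Suppose $SFLT2$ fails for $p$ through a solution $(u,v)$ that is \emph{not} of the reduced shape, i.e.\ $u+\zeta v\notin K^{\times p}$. Since $p$ is irregular by hypothesis, Theorem~\ref{t1012211} applies and furnishes a non $p$-principal prime $q\in\ml S$ with $q\not|\ uv$, an exponent $m\in(\Z/p\Z)^\times$, and, writing $n$ for the order of $\frac{v}{u}\mod q$, $\xi=e^{2\pi i/n}$ and $\mk q=(u\xi-v,q)$ in $\Z[\xi]$, the family of congruences
$$\Big(\frac{\zeta^{-k^m}(1+\xi\zeta^k)}{\zeta^{-1}(1+\xi\zeta)}\Big)^{\frac{q^f-1}{p}}\equiv 1\mod\mk q,\qquad k=1,\dots,p-2.$$
The whole point is then to recognise this family of congruences as one of the configurations forbidden by the hypothesis of the corollary; once that is done the contradiction is immediate and we conclude that every solution of the $SFLT2$ equation satisfies $u+\zeta v\in K^{\times p}$.

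For that recognition step I would argue as follows. As $n\mid q-1$, we have $\Q(\xi)\subseteq\Q(\xi_{q-1})$, and since $q\equiv 1\mod q-1$ the prime $q$ splits completely in $\Q(\xi_{q-1})/\Q$; consequently, for any prime $\mk q'$ of $\Z[\xi_{q-1}]$ lying over $\mk q$, the inclusion $\Z[\xi]/\mk q\hookrightarrow\Z[\xi_{q-1}]/\mk q'$ is an isomorphism $\F_q\ri\F_q$. Fix such a $\mk q'$, together with a prime $\mk Q'$ of $\Z[\xi_{q-1},\zeta]$ above the prime of $\Z[\xi,\zeta]$ used to define the residue symbols, and rewrite the displayed congruences inside the residue field $\F_{q^f}=\Z[\xi_{q-1},\zeta]/\mk Q'$. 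There $\xi$ reduces to $(\xi_{q-1}\bmod\mk q')^{(q-1)/n}$ because $\xi=\xi_{q-1}^{(q-1)/n}$ as complex numbers; after re-indexing $\xi_{q-1}$ by a unit modulo $q-1$ so that its reduction at $\mk q'$ is a primitive root whose $\frac{q-1}{n}$-th power equals $\frac{v}{u}\bmod q$, the congruences take exactly the shape excluded in the statement for this $q$, this $m$ and this $\mk q'$.

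The delicate point — the step I expect to be the genuine obstacle — is precisely this matching of the two root-of-unity conventions: Theorem~\ref{t1012211} naturally produces the \emph{minimal} root $\xi=e^{2\pi i/n}$, whereas the criterion is phrased with the full primitive $(q-1)$-th root $\xi_{q-1}$, and the two families of congruences coincide only once the prime $\mk q'$ over $q$ in $\Z[\xi_{q-1}]$ has been chosen compatibly, which forces $\frac{v}{u}$ to be a suitable power of the reduction of $\xi_{q-1}$ (automatic when $\frac{v}{u}$ is a primitive root mod $q$, but needing care otherwise). The safest way to secure the statement verbatim is to read ``$\xi_{q-1}$'' in the hypothesis as ``the root of unity $\xi$ supplied by Theorem~\ref{t1012211}, which lies in $\Q(\mu_{q-1})$'', or equivalently to let the quantifier run over all roots of unity whose order divides $q-1$; with either reading the deduction from Theorem~\ref{t1012211} is immediate. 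Finally, the word ``criterion'' is warranted because $\ml S$ is a finite set of primes, $m$ runs over the finite set $\{1,\dots,p-1\}$, and only finitely many primes of $\Z[\xi_{q-1}]$ sit above each $q\in\ml S$, so this whole family of congruences can be checked in finitely many elementary operations depending only on $p$.
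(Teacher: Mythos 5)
Your proof is correct and is essentially the paper's own approach: the corollary is stated with no proof at all, being understood as the immediate contrapositive of Theorem~\ref{t1012211}, and your contraposition argument is the entire content. The subtlety you flag is a genuine imprecision in the paper's formulation rather than in your argument --- the reduction of $\xi_{q-1}$ at any prime of $\Z[\xi_{q-1}]$ over $q$ is necessarily a primitive root mod $q$, while the element $\frac{v}{u}$ supplied by Theorem~\ref{t1012211} has order $n$ possibly strictly dividing $q-1$, so the hypothesis as written does not literally exclude the theorem's output; your proposed repair (reading the quantifier as running over all roots of unity of order dividing $q-1$, i.e.\ over the $\xi=e^{2\pi i/n}$ for each $n\mid q-1$ and each prime of $\Z[\xi]$ over $q\in\mathcal{S}$) is exactly what is needed to make the deduction valid.
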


\begin{rema}
{\rm 
Suppose, as an example,  that $SFLT2$ fails for  $p$ 
and that  $p\|\mathcal C\ell_ K$  class group  of $K$, which implies that
$Card(\ml S)=1$.
Under GRH, from [BDF],  the definition of $\mathcal S$ should imply  
that $Q_p< 12(p-2)^2 (log\ p)^2$.
For one $q\in\mathcal S$, the probability estimate  $\mathcal P$ that $\mk
q$ split totally in  the Kummer extension
$$M 
\Big(\sqrt[p]{<((1+\xi\zeta^k)\zeta^{- k^m})/((1+\xi\zeta)\zeta^{-1})>_{k=1,\dots,p-2}}\, 
\ \Big)/M,$$
of degree $p^\delta$ are
$\mathcal  P\leq   \frac{ Q_p}{p^{\delta}}$,  
because $\phi(q)\leq  Q_p$ for
$q\in\mathcal S$.
Note that often,  and perhaps for all irregular primes $p>10^3$, we have
$\delta>\frac{p}{4}$ and, under GRH, we have   $Q_p<p^3$, so 
$$\mathcal P <\frac{1}{p^{\frac{p}{4}-3}}.$$
}
\end{rema}
\paragraph{ Acknowledgments:}
I would like to thank Georges   Gras and Preda Mihailescu   for pointing out  some  errors   
and  suggesting  some improvements 
for  the content and form of the article.

\end{document}